\newtheorem{theorem}{Theorem}[section]
\newtheorem{proposition}[theorem]{Proposition}
\theoremstyle{remark}
\newtheorem{remark}[theorem]{Remark}
\newtheorem*{note}{Note}
\numberwithin{equation}{section}
\begin{document}

\title[Bethe Ansatz and zeros of hypergeometric orthogonal polynomials]
{Solutions of convex Bethe Ansatz equations and the zeros of  (basic) hypergeometric orthogonal polynomials}

\author{J.F.  van Diejen}

\address{
Instituto de Matem\'atica y F\'{\i}sica, Universidad de Talca,
Casilla 747, Talca, Chile}

\email{diejen@inst-mat.utalca.cl}

\author{E. Emsiz}

\address{
Facultad de Matem\'aticas, Pontificia Universidad Cat\'olica de Chile,
Casilla 306, Correo 22, Santiago, Chile}
\email{eemsiz@mat.uc.cl}

\subjclass[2010]{Primary: 33D45; Secondary  26C10, 33C45, 81R12, 82B23}
\keywords{(basic) hypergeometric orthogonal polynomials, zeros of orthogonal polynomials, convex Bethe Ansatz equations}

\thanks{This work was supported in part by the {\em Fondo Nacional de Desarrollo
Cient\'{\i}fico y Tecnol\'ogico (FONDECYT)} Grants  \# 1170179 and \# 1181046.}

\date{May 2018}

\begin{abstract}
Via the solutions of systems of algebraic equations of Bethe Ansatz type, we arrive at bounds for
the zeros of orthogonal (basic) hypergeometric polynomials belonging to the Askey-Wilson, Wilson and continuous Hahn families.
\end{abstract}

\maketitle



\section{Introduction}
The study of the zeros of orthogonal polynomials has a rich history
 \cite{sze:orthogonal} stimulated, in particular, by  its relevance for the theory of numerical approximation
\cite{gau:orthogonal}. Of special interest are  the zeros of the classical families of hypergeometric orthogonal polynomials, which have  been 
fruitfully analyzed e.g. through  Sturm-Liouville theory and via
Stieltjes'  electrostatic interpretation 
\cite{sze:orthogonal,ahm-laf-mul:spacing,for-rog:electrostatics,gat:new,elb-laf-rod:zeros,gru:variations,ism:electrostatic,mar-mar-mar:electrostatic,jor-too:convexity,dri-jor:bounds,are-dimi-god-raf:inequalities,are-dimi-god-pas:zeros,sim:electrostatic}.
In this work we are mainly concerned with estimates for the locations of the zeros of some well-studied (basic) hypergeometric orthogonal polynomial families belonging to the ($q-$)Askey scheme \cite{ask-wil:some,koe-swa:hypergeometric}, while other relevant issues
concerning these zeros, such as e.g.
their dependence  on the parameters
\cite{sze:orthogonal,mul:properties,ism:classical}, interlacing properties  \cite{sze:orthogonal,dri:note,han-van:interlacing,goc-jor-rag-swa:interlacing}, or their asymptotical behavior \cite{sze:orthogonal,sim:fine,dea-huy-kui:asymptotic} will  not be addressed.

A classic bound for the locations of the zeros
\begin{equation}
\pi >\xi^{(n)}_1>\xi^{(n)}_2\cdots >\xi^{(n)}_{n-1}>\xi^{(n)}_n>0
\end{equation}
of the Jacobi polynomial $P_n^{(\alpha ,\beta)}(\cos (\xi))$ \cite[Ch. 9.8]{koe-swa:hypergeometric}  with $-\frac{1}{2} \leq \alpha ,\beta \leq \frac{1}{2}$
is provided by  Buell's estimate \cite[Eqs. (6.3.5), (6.3.7)]{sze:orthogonal}:
\begin{equation}\label{buell}
 \frac{\bigl( n+1-j+\frac{1}{2}(\alpha +\beta-1)\bigr) \pi}{n+\frac{1}{2}(\alpha +\beta+1)}  \leq \xi^{(n)}_j \leq \frac{(n+1-j)\pi }{n+\frac{1}{2}(\alpha +\beta+1)}  \qquad (j=1,\ldots ,n)
\end{equation}
(where all inequalities are actually strict unless $\alpha^2=\beta^2=\frac{1}{4}$).
For $\alpha=\beta= \frac{1}{2}$ the estimate in Eq. \eqref{buell} becomes  exact; indeed,
the Jacobi polynomial $P_n^{(\alpha ,\beta)}(\cos (\xi))$  degenerates in this situation to the Chebyshev polynomial of the second kind $U_n\bigl(\cos(\xi)\bigr)=\frac{\sin \bigl((n+1)\xi\bigr)}{\sin(\xi)}$.

Below we will derive a similar estimate for the corresponding zeros of the Askey-Wilson polynomial
$p_n(\cos (\xi);a,b,c,d; q)$ \cite{ask-wil:some},
\cite[Ch. 14.1]{koe-swa:hypergeometric} with parameters in the domain $-1 <  a,b,c,d, q<1$:
\begin{subequations}
\begin{equation}\label{aw-zeros:a}
 \frac{(n+1-j) \pi}{k_-^{(n)}(a,b,c,d ; q)}  \leq \xi^{(n)}_j \leq  \frac{(n+1-j) \pi}{k_+^{(n)}(a,b,c,d ; q)}  \qquad (j=1,\ldots ,n),
\end{equation}
where
\begin{align}\label{aw-zeros:b}
k_{\pm}^{(n)}& (a,b,c,d ; q)= (n-1)  \Biggl(\frac{1-|q|}{1+ |q|}\Biggr)^{\pm 1}+ \\
&\frac{1}{2}\left( \Biggl(\frac{1-|a|}{1+ |a|}\Biggr)^{\pm 1}+ \Biggl(\frac{1-|b|}{1+ |b|}\Biggr)^{\pm 1}+ \Biggl(\frac{1-|c|}{1+ |c|}\Biggr)^{\pm 1}+ \Biggl(\frac{1-|d|}{1+ |d|}\Biggr)^{\pm 1} \right) .
\nonumber
\end{align}
\end{subequations}
The estimate in Eqs. \eqref{aw-zeros:a}, \eqref{aw-zeros:b} becomes exact for vanishing parameters $a,b,c,d$ and $q$, and again
$p_n(\cos (\xi);0,0,0,0 ; 0)=U_n\bigl(\cos(\xi)\bigr)$ in this situation. Notice however that our formula renders only a trivial lower bound (viz. zero) if one of the parameters tends to $1$ in absolute value, and that---especially for the larger roots---the upper bound is nontrivial (i.e. smaller than $\pi$) only for parameter values
sufficiently close to $0$. In particular, the Askey-Wilson estimate in Eqs. \eqref{aw-zeros:a}, \eqref{aw-zeros:b} merely produces trivial bounds on the zeros of the
Jacobi polynomial via a direct application of
the well-known  degeneration formula (cf. e.g. \cite[Ch. 14.10]{koe-swa:hypergeometric}):
\begin{equation*}
\lim_{q\to 1}
c_n\, p_n(\cos(\xi); q^{\frac{\alpha}{2}+\frac{1}{4}}, q^{\frac{\alpha}{2}+\frac{3}{4}}, -q^{\frac{\beta}{2}+\frac{1}{4}}, -q^{\frac{\beta}{2}+\frac{3}{4}} ; q)=P_n^{(\alpha ,\beta)}(\cos (\xi))
\end{equation*}
(where---employing standard notations for the $q$-shifted factorials---the normalization
factor is given explicitly by  $c_n:= q^{(\frac{\alpha}{2}+\frac{1}{4})n} / (q,-q^{\frac{1}{2}(\alpha+\beta+1)}, -q^{\frac{1}{2}(\alpha+\beta+2)};q)_n$).

Many fundamental properties of the Askey-Wilson polynomials were first presented in the seminal memoir  \cite{ask-wil:some}.
The zeros were investigated at an early stage by L. Chihara (for $q$ integral and $a,b,c,d$ rational) in connection with the (non)existence of certain perfect codes
\cite{cih:zeros}. More recently it was observed that the locations of the zeros under consideration  are determined by an algebraic system of Bethe Ansatz equations  \cite{ism-lin-roa:bethe,die:equilibrium,oda-sas:equilibrium,bih-cal:properties:b} stemming from the celebrated second-order $q$-difference equation
satisfied by the Askey-Wilson polynomials\footnote{Whereas a classical result of Bochner characterizes the Jacobi polynomials  as ``the most general orthogonal family satisfying a linear homogeneous second-order differential equation'', the Askey-Wilson polynomials are known to constitute ``the most general orthogonal family satisfying a linear homogeneous second-order $q$-difference equation'' \cite{gru-hai:q-version}.}
 \cite[Ch. 16.5]{ism:classical}. Remarkably, this algebraic system turns out to be closely related to a family of Bethe Ansatz equations that emerge when diagonalizing $q$-boson models with open-end boundary interactions \cite{li-wan:exact,die-ems:orthogonality,die-ems-zur:completeness}.

We will consider two ample classes of (generalized) Bethe Ansatz type equations that are referred to as Bethe systems of type A and of type B. For special parameter choices, these Bethe equations of type A and of type B arise when diagonalizing quantum integrable particle models with periodic boundary conditions and with open-end boundary conditions, respectively.
Both types of systems manifest themselves here in three versions: rational $(r)$, hyperbolic $(h)$ and trigonometric $(t)$.  We will solve the corresponding  Bethe systems by means of a powerful technique going back to Yang and Yang
\cite{yan-yan:thermodynamics,mat:many-body,gau:bethe,kor-bog-ize:quantum}, which provides the Bethe roots in terms of the global minima of an associated family of strictly convex Morse functions and automatically produces bounds estimating the locations of these roots.
The bounds in Eqs. \eqref{aw-zeros:a}, \eqref{aw-zeros:b} for zeros of the Askey-Wilson polynomials  then follow
via a parameter-specialization of  the trigonometric Bethe systems of type B. Similarly, the corresponding rational
Bethe equations of type B give rise to lower bounds for the zeros of the 
Wilson polynomials \cite{wil:some}. Finally,  lower bounds for the zeros of the symmetric continuous Hahn polynomials \cite{ask-wil:set} are retrieved via the rational Bethe systems of type A.

The material  is organized as follows. In Sections \ref{sec2} and \ref{sec3} the Bethe systems of types A and B are exhibited in their general form and solved using the techniques of Yang and Yang \cite{yan-yan:thermodynamics}.
In Section \ref{sec4} we specialize the parameters in the  Bethe system of type B so as to achieve the bounds for the zeros of the Wilson polynomials (rational version)
and
the Askey-Wilson polynomials  (trigonometric version), respectively.
In Section \ref{sec5}, an analogous  parameter specialization of the rational Bethe system of type A then entails the lower bounds for the zeros of the symmetric continuous Hahn polynomials.

\begin{note}
Throughout it will be implicitly assumed that empty products are equal to $1$ and that empty sums are equal to $0$. We will also freely employ standard notations for (basic) hypergeometric series and $(q$-)shifted factorials in accordance with the conventions in Ref. \cite{koe-swa:hypergeometric}.
\end{note}

\section{Convex Bethe systems of type A}\label{sec2}
The idea of the Bethe Ansatz method is  to convert the spectral problem for amenable quantum integrable particle models into an algebraic problem: the spectrum is computed through a complete set of solutions to an auxiliary system of algebraic equations \cite{mat:many-body,gau:bethe,tak:integrable,kor-bog-ize:quantum}.
The combinatorics of such Bethe Ansatz solutions was investigated recently in Ref. \cite{koz-skl:combinatorics}.
Here we will consider a rather wide class of (in general transcendental)
Bethe type equations that are {\em convex} in the sense that they can be solved in terms of the critical points of an associated family of strictly convex Morse functions using the approach of Yang and Yang \cite{yan-yan:thermodynamics,mat:many-body,gau:bethe,kor-bog-ize:quantum}.
Special instances of the type A systems  in this section have appeared in the literature in connection with the spectral problems of exactly solvable quantum particle models with periodic boundary conditions. Specifically, the Bethe Ansatz equations governing the spectral problems of the periodic Lieb-Liniger quantum nonlinear Schr\"odinger equation
\cite{lie-lin:exact,yan-yan:thermodynamics,mat:many-body,gau:bethe,dor:orthogonality,kor-bog-ize:quantum}
and its lattice discretization due to Izergin and Korepin  \cite{ize-kor:lattice,dor:orthogonality} (cf. also \cite[Ch. VIII.3]{kor-bog-ize:quantum})
correspond to rational systems of type A. Trigonometric/hyperbolic systems of type A  arise in turn as Bethe Ansatz equations for the 
 $q$-boson model \cite{bog-bul:q-deformed,bog-ize-kit:correlation,tsi:quantum,die:diagonalization,kor:cylindric} and for the
lattice quantum sine-Gordon equation \cite[Ch. VIII.5]{kor-bog-ize:quantum}. The well-studied periodic Heisenberg XXX and XXZ spin chains also give rise to rational and trigonometric/hyperbolic Bethe Ansatz equations of type A  \cite{mat:many-body,gau:bethe,bax:exactly,kor-bog-ize:quantum}, but
 these spin models correspond to parameter values that do not belong to the convex regime considered here. The results in this section therefore do not apply directly to such models
and alternative techniques have been incorporated to analyze the corresponding Bethe Ansatz equations in these situations,
cf. e.g. Refs. \cite{muk-tar-var:bethe} and \cite{koz:condensation} and references therein.

\subsection{Bethe equations}\label{BE-A}
Given $u\in \{ r,h,t\}$ and 
\begin{equation}\label{S}
\text{s}(x)=\text{s}^{(u)}(x):=
\begin{cases}
\frac{x}{2} &\text{if}\ u=r, \\
\sinh(\frac{x}{2}) &\text{if}\ u=h, \\
\sin(\frac{x}{2}) &\text{if}\ u=t ,
\end{cases}
\end{equation}
the Bethe system of type A is defined by $n$  equations in the variables $\xi_1,\ldots ,\xi_n$ of the form
\begin{equation}\label{BAE-A}
\boxed{e^{i\alpha  \xi_j }
= e^{2\pi i \beta}
\prod_{1\le k \le K}
 \frac{\text{s}  (ia_k+\xi_j)}{\text{s}(ia_k-\xi_j)}
\prod_{\substack{1\le j^\prime \le n, \, j^\prime\neq j \\ 1 \le l \le L}}
 \frac{\text{s}(ib_{l}  + \xi_j-\xi_{j^\prime})}{\text{s}(ib_{l}- \xi_j + \xi_{j^\prime} )} }
\end{equation}
($j=1,\dots, n$). Here $\alpha, \beta$, $a_1,\ldots ,a_K$ and $b_1,\ldots ,b_L$ refer to a  choice of $K+L+2$  parameters.
Throughout  this section we 
assume (unless explicitly stated otherwise) that
\begin{equation}\label{convex-A}
\boxed{\alpha\in (0,\infty ), \quad \beta\in [0,1), \quad  a_k, b_l  \in \begin{cases}  (0,\infty) &\text{if}\ u=r \\ (0,\pi )&\text{if}\ u=h \\  (0,\infty )\cup\big( (0,\infty)+i\pi \bigr) &\text{if}\ u=t  \end{cases}}
\end{equation}
($k=1,\ldots ,K$, $l=1,\ldots, L$). As will be seen shortly, these restrictions on the parameters guarantee that the roots of our Bethe system are governed by a convex (Yang-Yang) Morse function.

\subsection{Bethe roots}\label{BR-A}
For any $\mu=(\mu_1,\ldots,\mu_n)$ belonging to
\begin{equation}\label{weights-A}
\Lambda_{\text{A}}:=   
\{ \mu\in\mathbb{Z}^n \mid   \mu_1>\mu_2> \cdots > \mu_n\} ,
\end{equation}
let us define the following Morse function in the variable $\boldsymbol{\xi}=(\xi_1,\ldots ,\xi_n)\in\mathbb{R}^n$:
\begin{align}\label{Morse-A}
V^{(u)}_{\text{A},\mu} (\boldsymbol{\xi};\alpha ,\beta):=&
\sum_{1\leq j\leq n}  \Biggl(
{ \frac{1}{2} }\alpha \xi_j^2-2\pi (\mu_j+\beta )\xi_j 
+  \sum_{1\leq k\leq K} \int^{\xi_j}_0 v_{a_k}(x)\, \text{d}x\Biggr) \\
&+  \sum_{\substack{1\le j < j^\prime \le n \\ 1\leq l\leq L}}
     \int_0^{\xi_j-\xi_{j^\prime}} v_{b_l}(x)\, \text{d}x
  ,\nonumber
\end{align}
with 
\begin{equation}\label{v}
v_a (x) = v_a^{(u)}(x) :=
\begin{cases}
\int_0^x \frac{2a}{a^2+y^2}  \text{d}y=2\arctan(\frac{x}{a}) & \text{if $u=r$},\\
\int_0^x \frac{ \sin(a)}{\cosh(y)-\cos(a)} \text{d}y & \text{if $u=h$},\\
\int_0^x \frac{ \sinh(a)}{\cosh(a)-\cos(y)} \text{d}y& \text{if $u=t$}.
\end{cases}
\end{equation}
In the spirit of Yang and Yang \cite{yan-yan:thermodynamics}, this Morse function is designed in such a way that the equation for its critical points (cf. Eq. \eqref{CE-A} below) reproduces our Bethe equation \eqref{BR-A} upon exponentiation (cf. the proof of Proposition \ref{Bethe-roots-A:prp} below). The parameter restrictions in Eq.  \eqref{convex-A} now guarantee that
$V^{(u)}_{\text{A},\mu} (\boldsymbol{\xi};\alpha ,\beta )$   \eqref{Morse-A} is smooth and that
$V^{(u)}_{\text{A},\mu} (\boldsymbol{\xi};\alpha ,\beta )\to \infty$
when $|\boldsymbol{\xi}|\to\infty$,  so the function  in question possesses  a global minimum in $\mathbb{R}^n$.
Notice in this connection that the contributions from the integrals in Eq. \eqref{Morse-A} are nonnegative for all  $\boldsymbol{\xi}\in\mathbb{R}^n$ (because $v_a(x)$ \eqref{v} is odd and increasing in $x$ for the relevant parameter values) and that for $|\boldsymbol{\xi}|\to\infty$ the nonnegative quadratic terms up front
dominate  possibly negative contributions stemming from the linear terms.

The minimum in question is unique by convexity. Indeed, the Hessian
\begin{align}\label{Hesse-A}
&H^{\text{A}}_{j,j^\prime} (\boldsymbol{\xi}):=\partial_{\xi_j}\partial_{\xi_{j^\prime}} V^{(u)}_{\text{A},\mu} (\boldsymbol{\xi};\alpha ,\beta ) \\
&={\footnotesize
\begin{cases}
\alpha + \sum_{1\le k \le K}v_{a_k}'(\xi_j)  
+
 \sum_{\substack{\ell\neq j\\ 1\le l \le L}} v_{b_l}'(\xi_j-\xi_\ell) & \text{if $ j^\prime=j$,}\\
\sum_{1\le l \le L}  v_{b_l}'(\xi_{j^\prime}-\xi_j) & \text{if $j^\prime\neq j$,}\\
\end{cases} }\nonumber
\end{align}
is manifestly positive definite:
\begin{eqnarray*}
\lefteqn{\sum_{1\leq j,j^\prime\leq n}  x_j x_{j^\prime}H^{\text{A}}_{j,j^\prime}  (\boldsymbol{\xi})  
= }&& \\
&& \sum_{1\leq j\leq n} \Bigl( \alpha
+   \sum_{1\le k \le K}v_{a_k}'(\xi_j)  
 \Bigr) x_j^2 
+  \sum_{\substack{1\leq j<j^\prime \leq n \\ 1\le l \le L}} 
v_{b_l}'(\xi_j -\xi_{j^\prime})(x_j-x_{j^\prime})^2
\end{eqnarray*}
(since the derivatives $v^\prime_{a_k}(x) $ and $ v^\prime_{b_l}(x)   $ 
are positive for our parameter regime).

The upshot is that for any $\mu\in\Lambda_{\text{A}}$ \eqref{weights-A} the critical
equation $\nabla_{\boldsymbol{\xi}} V^{(u)}_{\text{A},\mu} (\boldsymbol{\xi};\alpha ,\beta)=0$, which is given explicitly by the following transcendental system
\begin{equation}\label{CE-A}
\alpha\xi_j +
 \sum_{1 \le k \le K} 
 v_{a_k}(\xi_j)
+\sum_{\substack{1\le j^\prime \le n,\,  j^\prime \neq j \\ 1 \le l\le L}}   v_{b_l}(\xi_j-\xi_{j^\prime})
=2\pi (\mu_j+\beta)
\end{equation}
($j=1,\dots, n$), has a unique solution $\boldsymbol{\xi}=\boldsymbol{\xi}^{(u)}_{\text{A},\mu}$ consisting of the global minimum
of the strictly convex Morse function $V^{(u)}_{\text{A},\mu} (\boldsymbol{\xi};\alpha ,\beta )$ \eqref{Morse-A}.

\begin{proposition}[Bethe roots of type A]\label{Bethe-roots-A:prp}
(i) For the parameter regime in Eq. \eqref{convex-A} and
any $\mu\in\Lambda_{\text{A}}$ \eqref{weights-A}, the unique global minimum $\boldsymbol{\xi}=\boldsymbol{\xi}^{(u)}_{\text{A},\mu}$ of
$V^{(u)}_{\text{A},\mu} (\boldsymbol{\xi};\alpha ,\beta)$ \eqref{Morse-A} produces a solution of the
 Bethe system of type A \eqref{BAE-A}.

(ii) The assignment $\mu\to \boldsymbol{\xi}^{(u)}_{\text{A},\mu}$, $\mu\in\Lambda_{\text{A}}$ is injective and  depends smoothly on the parameters
 \eqref{convex-A}.

\end{proposition}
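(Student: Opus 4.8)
The plan is to read off part (i) directly from the way the Morse function $V^{(u)}_{\text{A},\mu}$ \eqref{Morse-A} was engineered, and to obtain part (ii) from a comparison of the critical equations together with the implicit function theorem. The crux of (i) is the elementary exponentiation identity
\[
\exp\bigl(-i\,v^{(u)}_a(x)\bigr)=\frac{\text{s}^{(u)}(ia+x)}{\text{s}^{(u)}(ia-x)}\qquad(x\in\mathbb{R}),
\]
to be established for every $u\in\{r,h,t\}$ and every admissible parameter $a$ in the range \eqref{convex-A}. I would prove it by logarithmic differentiation: since $\text{s}^{(u)}(ia\pm x)$ never vanishes for real $x$ in the admissible range, the right-hand side is a nonvanishing smooth function of $x$ equal to $1$ at $x=0$, hence possesses a unique continuous logarithm $g(x)$ with $g(0)=0$; a one-line computation with the addition formulas for $\tfrac{x}{2}$, $\sinh(\tfrac{x}{2})$ and $\sin(\tfrac{x}{2})$ then shows that $g'(x)$ equals $-i$ times the respective integrand in \eqref{v} (the numerators collapsing to multiples of $ia$, $i\sin a$, $i\sinh a$ and the denominators to $a^2+x^2$, $\cosh x-\cos a$, $\cosh a-\cos x$), i.e. $g'(x)=-i\frac{d}{dx}v^{(u)}_a(x)$, so $g(x)=-i\,v^{(u)}_a(x)$ upon integration. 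In the trigonometric case one treats the two components $a\in(0,\infty)$ and $a\in(0,\infty)+i\pi$ separately, substituting $a=a'+i\pi$ in the latter to make manifest that $v^{(u)}_a$ is real-valued, odd and increasing there as well.

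Granting this identity, part (i) follows at once: the global minimum $\boldsymbol{\xi}=\boldsymbol{\xi}^{(u)}_{\text{A},\mu}$ of $V^{(u)}_{\text{A},\mu}$ is in particular a critical point, hence satisfies the transcendental system \eqref{CE-A}; multiplying \eqref{CE-A} by $i$ and exponentiating, the factor $e^{2\pi i\mu_j}$ drops out because $\mu_j\in\mathbb{Z}$, and applying the identity above with $x=\xi_j$ to the contributions of the $v_{a_k}$ and with $x=\xi_j-\xi_{j^\prime}$ to those of the $v_{b_l}$ converts \eqref{CE-A} verbatim into the Bethe system \eqref{BAE-A}. For the injectivity in part (ii) I would argue by inspection of \eqref{CE-A}: if $\boldsymbol{\xi}^{(u)}_{\text{A},\mu}=\boldsymbol{\xi}^{(u)}_{\text{A},\mu^\prime}=:\boldsymbol{\xi}$ with $\mu,\mu^\prime\in\Lambda_{\text{A}}$ \eqref{weights-A}, then $\boldsymbol{\xi}$ is a critical point of both $V^{(u)}_{\text{A},\mu}$ and $V^{(u)}_{\text{A},\mu^\prime}$, so it solves \eqref{CE-A} for the weight $\mu$ and for the weight $\mu^\prime$; but the left-hand side of \eqref{CE-A} is independent of the weight, so subtracting the two instances gives $\mu_j=\mu^\prime_j$ for all $j$, i.e. $\mu=\mu^\prime$.

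For the smooth dependence on the parameters I would apply the implicit function theorem to the map $F=(F_1,\dots,F_n)$ whose $j$-th component is the difference of the two sides of \eqref{CE-A}, viewed as a function of $\boldsymbol{\xi}$ together with $(\alpha,\beta,a_1,\dots,a_K,b_1,\dots,b_L)$. This $F$ vanishes at $\boldsymbol{\xi}^{(u)}_{\text{A},\mu}$, is jointly smooth in $\boldsymbol{\xi}$ and in the parameters on the domains \eqref{convex-A} --- for $u\in\{h,t\}$ this requires a brief argument differentiating under the integral sign, the integrands in \eqref{v} being jointly smooth there --- and its Jacobian with respect to $\boldsymbol{\xi}$ is exactly the Hessian $H^{\text{A}}(\boldsymbol{\xi})$ \eqref{Hesse-A}, already shown to be positive definite and hence invertible throughout the parameter range; uniqueness of the global minimum then upgrades the local smooth branch produced by the implicit function theorem to the asserted globally defined smooth dependence of $\boldsymbol{\xi}^{(u)}_{\text{A},\mu}$ on the parameters.

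I expect the only genuine obstacle to lie in part (i), namely carrying out the case-by-case verification of the exponentiation identity while keeping careful track of branches of the logarithm and of the disconnected parameter range in the trigonometric case; everything else --- criticality implying the Bethe equations, injectivity, and the implicit-function-theorem argument for smoothness --- is routine once the convexity and the positive-definiteness of $H^{\text{A}}$ recorded in the preceding discussion are in hand.
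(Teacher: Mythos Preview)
Your proposal is correct and follows essentially the same approach as the paper: establish the exponentiation identity $e^{-iv_a^{(u)}(x)}=\text{s}^{(u)}(ia+x)/\text{s}^{(u)}(ia-x)$, then multiply \eqref{CE-A} by $i$ and exponentiate for part (i), and read off injectivity directly from \eqref{CE-A} while invoking the implicit function theorem with Jacobian equal to the positive-definite Hessian \eqref{Hesse-A} for part (ii). Your write-up is in fact more detailed than the paper's, which simply states the exponentiation identity without derivation and handles the remaining steps in a single sentence each.
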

\begin{proof}
(i) Since for any $x\in\mathbb{R}$ and $a=a_k$, $b_l$ subject to the restrictions in Eq. \eqref{convex-A}:
\begin{equation}
e^{-i v_a^{(u)}(x)} =\frac{\text{s}^{(u)}(ia+x)}{\text{s}^{(u)}(ia-x)},
\end{equation}
it is clear from Eq. \eqref{CE-A}---upon multiplying by $i$ and exponentiating both sides---that the critical point $\boldsymbol{\xi}=\boldsymbol{\xi}^{(u)}_{\text{A},\mu}$ solves the Bethe equations \eqref{BAE-A}.

(ii) That the assignment $\mu\to \boldsymbol{\xi}^{(u)}_{\text{A},\mu}$, $\mu\in\Lambda_{\text{A}}$ defines an injection is also immediate from the system in Eq. \eqref{CE-A}, while the smoothness in the parameters  \eqref{convex-A} follows from it by the implicit function theorem. Indeed, our system is manifestly smooth in these parameters and its Jacobian amounts to the Hessian \eqref{Hesse-A} (and is thus invertible).
\end{proof}

\subsection{Bethe bounds}\label{BB-A}
It follows from the system in Eq. \eqref{CE-A} that the global minimum
$\boldsymbol{\xi}^{(u)}_{\text{A},\mu}$ of $V^{(u)}_{\text{A},\mu} (\boldsymbol{\xi};\alpha ,\beta)$ \eqref{Morse-A} with  $\mu\in\Lambda_{\text{A}}$ \eqref{weights-A}
belongs to the open wedge
\begin{equation}\label{wedge-A}
\mathbb{A}_{\text{A}}:= \{ \boldsymbol{\xi}\in\mathbb{R}^n\mid \xi_1 >\xi_2>\cdots >\xi_n \} .
\end{equation}
Indeed, by subtracting the $j^\prime$th equation from the $j$th equation we see that
\begin{align}\label{CED-A}
\alpha (\xi_j -\xi_{j^\prime}) &+
 \sum_{1 \le k \le K} 
\bigl(  v_{a_k}(\xi_j)- v_{a_k}(\xi_{j^\prime}) \bigr)  \\
&+\sum_{\substack{1\le j^{\prime\prime} \le n\\ 1 \le l\le L}}  \bigl(  v_{b_l}(\xi_j-\xi_{j^{\prime\prime}})-v_{b_l}(\xi_{j^\prime}-\xi_{j^{\prime\prime}})\bigr)
=2\pi (\mu_j-\mu_{j^\prime}) .\nonumber
\end{align}
Since $v_{a_k}(x)$ and $v_{b_l}(x)$ are monotonously increasing for our parameter regime, it is manifest from Eq. \eqref{CED-A} that
$\xi_j > \xi_{j^\prime}$ if $\mu_j > \mu_{j^\prime}$ (because otherwise the LHS would be $\leq 0$ while the RHS is $>0$). By refining this analysis somewhat further, one arrives at the following bounds on the gaps between $\xi_j$ and $\xi_{j^\prime}$ at 
$\boldsymbol{\xi}=\boldsymbol{\xi}^{(u)}_{\text{A},\mu}$.

\begin{proposition}[Bethe bounds of type A]\label{Bethe-bounds-A:prp}
For the parameter regime in Eq. \eqref{convex-A} and
any $\mu\in\Lambda_{\text{A}}$ \eqref{weights-A}, one has that at the global minimum $\boldsymbol{\xi}=\boldsymbol{\xi}^{(u)}_{\text{A},\mu}$ of
$V^{(u)}_{\text{A},\mu} (\boldsymbol{\xi};\alpha ,\beta)$ \eqref{Morse-A}:
\begin{equation}\label{bounds-A}
\frac{2\pi(\mu_j-\mu_{j^\prime})}{\alpha+\kappa^{(u)}_-} \le \xi_j-\xi_{j^\prime} \le \frac{2\pi(\mu_j-\mu_{j^\prime})}{\alpha+\kappa^{(u)}_+}
\end{equation}
for $1\leq j<j^\prime\leq n$, where
\begin{equation*}
\kappa^{(u)}_-  := \begin{cases}
2\sum_{1\leq k\leq K} a_k^{-1}+2n \sum_{1\leq l\leq L}   b_l^{-1}&\text{if}\ u=r, \\
\sum_{1\leq k\leq K} \cot \bigl(\frac{1}{2} a_k\bigr)+n \sum_{1\leq l\leq L}   \cot \bigl(\frac{1}{2}b_l\bigr) &\text{if}\ u=h, \\
\sum_{1\leq k\leq K} \coth \bigl(\frac{1}{2} \emph{Re} (a_k)\bigr)+n \sum_{1\leq l\leq L}   \coth \bigl(\frac{1}{2}\emph{Re} (b_l)\bigr) &\text{if}\ u=t ,\end{cases}
\end{equation*}
and
\begin{equation*}
\kappa^{(u)}_+ := \begin{cases}
0&\text{if}\ u=r\ \text{or}\ u=h, \\
\sum_{1\leq k\leq K} \tanh \bigl(\frac{1}{2} \emph{Re}(a_k)\bigr)+n \sum_{1\leq l\leq L}   \tanh \bigl(\frac{1}{2}\emph{Re}(b_l)\bigr) &\text{if}\ u=t .\end{cases}
\end{equation*}
\end{proposition}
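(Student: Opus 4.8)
Here is how I would prove Proposition~\ref{Bethe-bounds-A:prp}.

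The plan is to read both inequalities straight off the difference of critical equations recorded in Eq.~\eqref{CED-A}, which is valid at $\boldsymbol{\xi}=\boldsymbol{\xi}^{(u)}_{\text{A},\mu}$ because this point solves the system \eqref{CE-A}. We have already seen that $\boldsymbol{\xi}^{(u)}_{\text{A},\mu}$ lies in the open wedge $\mathbb{A}_{\text{A}}$ \eqref{wedge-A}, so $\xi_j-\xi_{j^\prime}>0$ for $1\le j<j^\prime\le n$, and since $\mu\in\Lambda_{\text{A}}$ also $\mu_j-\mu_{j^\prime}>0$. The whole argument then reduces to one elementary sandwich estimate: whenever $y>y^\prime$ and $a$ is admissible as in Eq.~\eqref{convex-A},
\begin{equation*}
m^{(u)}(a)\,(y-y^\prime)\;\le\; v_a^{(u)}(y)-v_a^{(u)}(y^\prime)=\int_{y^\prime}^{y}\!\bigl(v^{(u)}_a\bigr)^\prime(t)\,\text{d}t\;\le\; M^{(u)}(a)\,(y-y^\prime),
\end{equation*}
where $M^{(u)}(a):=\sup_{t\in\mathbb{R}}\bigl(v^{(u)}_a\bigr)^\prime(t)$ and $m^{(u)}(a):=\inf_{t\in\mathbb{R}}\bigl(v^{(u)}_a\bigr)^\prime(t)$.

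The first concrete step is to evaluate these extrema from Eq.~\eqref{v}. For $u=r$ one has $\bigl(v_a\bigr)^\prime(t)=2a/(a^2+t^2)$, attaining its supremum $2/a$ at $t=0$, with infimum $0$; for $u=h$ one has $\bigl(v_a\bigr)^\prime(t)=\sin(a)/(\cosh t-\cos a)$, with supremum $\sin(a)/(1-\cos a)=\cot(\frac{1}{2}a)$ at $t=0$ and infimum $0$; and for $u=t$ one has $\bigl(v_a\bigr)^\prime(t)=\sinh(a)/(\cosh a-\cos t)$, which after inserting either $a\in(0,\infty)$ or $a=\rho+i\pi$ with $\rho>0$ (so that $\cosh a=\pm\cosh\rho$ and $\sinh a=\pm\sinh\rho$ with equal signs, and $\rho=\emph{Re}(a)$) becomes, in both cases, a fraction $\sinh(\rho)/(\cosh\rho\pm\cos t)$ with $\cos t\in[-1,1]$; letting $\cos t$ range over this interval gives supremum $\coth(\frac{1}{2}\emph{Re}(a))$ and infimum $\tanh(\frac{1}{2}\emph{Re}(a))$. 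These are precisely the per-parameter terms in $\kappa^{(u)}_\pm$.

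I would then substitute the sandwich estimate into Eq.~\eqref{CED-A}, applied with $(y,y^\prime)=(\xi_j,\xi_{j^\prime})$ to each of the $K$ increments built from $v_{a_k}$ and with $(y,y^\prime)=(\xi_j-\xi_{j^{\prime\prime}},\xi_{j^\prime}-\xi_{j^{\prime\prime}})$ to each of the $nL$ increments built from $v_{b_l}$; in all of these $y-y^\prime=\xi_j-\xi_{j^\prime}>0$, and for the two indices $j^{\prime\prime}\in\{j,j^\prime\}$ the increment collapses (using that $v_{b_l}$ is odd and vanishes at $0$) to $v_{b_l}(\xi_j-\xi_{j^\prime})$, which is still caught between $m^{(u)}(b_l)(\xi_j-\xi_{j^\prime})$ and $M^{(u)}(b_l)(\xi_j-\xi_{j^\prime})$. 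Since the pair sum runs over all $n$ values of $j^{\prime\prime}$, collecting the contributions yields
\begin{equation*}
\bigl(\alpha+\kappa^{(u)}_+\bigr)(\xi_j-\xi_{j^\prime})\;\le\;2\pi(\mu_j-\mu_{j^\prime})\;\le\;\bigl(\alpha+\kappa^{(u)}_-\bigr)(\xi_j-\xi_{j^\prime}),
\end{equation*}
with $\kappa^{(u)}_-=\sum_{1\le k\le K}M^{(u)}(a_k)+n\sum_{1\le l\le L}M^{(u)}(b_l)$ and $\kappa^{(u)}_+=\sum_{1\le k\le K}m^{(u)}(a_k)+n\sum_{1\le l\le L}m^{(u)}(b_l)$, which coincide with the quantities in the statement. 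Dividing by the positive factors $\alpha+\kappa^{(u)}_\pm$ gives Eq.~\eqref{bounds-A}. All of this is routine once the derivatives of Eq.~\eqref{v} are computed; the two points demanding a bit of care are the trigonometric case, where one must check that the two connected components of the admissible parameter domain in Eq.~\eqref{convex-A}---the positive real axis and its shift by $i\pi$---yield the \emph{same} bounds once written in terms of $\emph{Re}(a)$, and the bookkeeping of the combinatorial prefactor $n$ coming from the full sum over $j^{\prime\prime}\in\{1,\dots,n\}$ in the pair-interaction part of Eq.~\eqref{CED-A}, including the diagonal indices $j^{\prime\prime}=j$ and $j^{\prime\prime}=j^\prime$.
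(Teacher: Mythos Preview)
Your proof is correct and follows essentially the same approach as the paper: bounding $(v_a^{(u)})'$ pointwise and applying the resulting sandwich estimate term by term in Eq.~\eqref{CED-A} (the paper phrases this as ``the mean value theorem''). Your write-up is more explicit than the paper's about the two points you flag---the $i\pi$-shifted branch in the trigonometric case and the bookkeeping of the diagonal indices $j''\in\{j,j'\}$ yielding the factor $n$---but the underlying argument is identical.
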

\begin{proof}
When $a=a_k$ or $b_l$ from Eq. \eqref{convex-A}, the derivative of $v_a (x)=v^{(u)}_a (x) $ \eqref{v}  remains bounded:
\begin{equation*}
v_a^\prime (x)\in \begin{cases}
\left[0,\frac{2}{a}\right]   &\text{if}\  u=r ,\\
\left[0,\cot(\frac{a}{2})\right]   &\text{if}\  u=h, \\
\left[\tanh\big(\frac{1}{2}\text{Re}(a)\bigr),\coth\bigl(\frac{1}{2}\text{Re}(a)\bigr)\right]   &\text{if}\  u=t 
\end{cases}
\end{equation*}
(for any $x\in\mathbb{R}$).
Hence, if $1\leq j<j^\prime\leq n$ (so $\mu_j>\mu_{j^\prime}$ and  $\xi_j>\xi_{j^\prime}$)  Eq. \eqref{CED-A} implies (via the mean value theorem) that 
\begin{equation*}
  (\alpha +\kappa^{(u)}_+) (\xi_j-\xi_{j^\prime})     \leq    2\pi  (\mu_j-\mu_{j^\prime})          \leq  (\alpha +\kappa^{(u)}_-) (\xi_j-\xi_{j^\prime}) .
\end{equation*}
\end{proof}

\section{Convex Bethe systems of type B}\label{sec3}
In the case of quantum particle models with open-end boundary conditions rather than the more conventional periodic boundary conditions, the form of the Bethe Ansatz equations
is known to undergo some structural modifications \cite{gau:boundary,gau:bethe,skl:boundary}. Here we refer to the latter Bethe Ansatz equations as systems of type B, and we will again restrict attention
to a relatively wide class of equations in the convex regime. The mathematics  of trigonometric Bethe systems of type B 
was  addressed in
Refs. \cite{ism-lin-roa:bethe} (cf. also \cite[Ch. 16.5]{ism:classical}) and
\cite{bas:q-deformed} through $q$-difference Sturm-Liouville theory and $q$-deformed Onsager algebras, respectively. 
Specific examples of particle models leading to rational Bethe Ansatz equations of type B are  the
open quantum nonlinear Schr\"odinger equation  \cite{gau:boundary,gau:bethe,die-ems:orthogonality}
and its lattice discretization
\cite{doi-fio-rav:generalized}. The trigonometric/hyperbolic type B systems 
arise in turn as Bethe Ansatz equations for the 
 $q$-boson model with open-end boundary interactions
\cite{li-wan:exact,die-ems:orthogonality,die-ems-zur:completeness}. The open Heisenberg  XXX and XXZ spin chains with boundary interactions
\cite{alc-bar-bat-bax-qui:surface,skl:boundary,cao-lin-shi-wan:exact,nep:bethe,mel-tib-mar:bethe,fra-see-wir:separation,bel-cra-rag:algebraic}
again lead to  rational and trigonometric/hyperbolic Bethe Ansatz equations of type B that do not belong to the convex regime considered here, and thus fall outside the scope of the results presented below.

\subsection{Bethe equations}\label{BE-B}
The  Bethe system of type B is defined as 
\begin{equation}\label{BAE-B}
\boxed{e^{2i\alpha  \xi_j }
=
(-1)^{\varepsilon}
\prod_{1\le k \le K}
 \frac{\text{s}  (ia_k+\xi_j)}{\text{s}(ia_k-\xi_j)}
\prod_{\substack{1\le j^\prime \le n, \, j^\prime\neq j \\ 1 \le l \le L}}
 \frac{\text{s}(ib_{l}  + \xi_j+\xi_{j^\prime})}{\text{s}(ib_{l}- \xi_j - \xi_{j^\prime} )} 
 \frac{\text{s}(ib_{l}  + \xi_j-\xi_{j^\prime})}{\text{s}(ib_{l}- \xi_j + \xi_{j^\prime} )} }
\end{equation}
($j=1,\dots, n$). Here $\text{s}(x)=\text{s}^{(u)}(x)$ is taken from Eq. \eqref{S}, and in order to ensure
the convexity of the system
we will again
impose the following restrictions on the $L+K+2$ parameters $\alpha$, $\varepsilon$, $a_1,\ldots ,a_{K}$ and
$b_1,\ldots ,b_L$:
\begin{equation}\label{convex-B}
\boxed{\alpha\in (0,\infty ) ,\quad \varepsilon\in \{ 0,1\} , \quad  a_k, b_l  \in \begin{cases}  (0,\infty) &\text{if}\ u=r \\ (0,\pi )&\text{if}\ u=h \\  (0,\infty )\cup\big( (0,\infty)+i\pi \bigr) &\text{if}\ u=t  \end{cases}}
\end{equation}
($k=1,\ldots ,K$, $l=1,\ldots, L$).

\subsection{Bethe roots}\label{BR-B}
Following the same pattern as for type A, we establish the Morse function whose equation for the critical points reproduces the Bethe system of type B (after exponentiation):
\begin{align}\label{Morse-B}
V^{(u)}_{\text{B},\mu} (\boldsymbol{\xi};\alpha,\varepsilon):=&
\sum_{1\leq j\leq n}  \Biggl(
\alpha \xi_j^2-2\pi \left(\mu_j+\frac{\varepsilon}{2} \right) \xi_j 
+  \sum_{1\leq k\leq K} \int^{\xi_j}_0 v_{a_k}(x)\, \text{d}x\Biggr) \\
&+  \sum_{\substack{1\le j < j^\prime \le n \\ 1\leq l\leq L}} \Biggl( \int_0^{\xi_j+\xi_{j^\prime}} v_{b_l}(x)\, \text{d}x +
     \int_0^{\xi_j-\xi_{j^\prime}} v_{b_l}(x)\, \text{d}x \Biggr)
  ,\nonumber
\end{align}
with $v_a(x)=v_a^{(u)}(x)$ as in Subsection \ref{BR-A} and
 \begin{equation}\label{weights-B}
\mu\in \Lambda_{\text{B}}:=   
\{ \mu\in\mathbb{Z}^n \mid   \mu_1>\mu_2> \cdots > \mu_n>  0\} .
\end{equation}
As before, the parameter restrictions in Eq. \eqref{convex-B} guarantee that (i)
$V^{(u)}_{\text{B},\mu} (\boldsymbol{\xi};\alpha)$   is smooth in $\boldsymbol{\xi}\in \mathbb{R}^n$, (ii)
$V^{(u)}_{\text{B},\mu} (\boldsymbol{\xi};\alpha)\to \infty$ when $|\boldsymbol{\xi}|\to\infty$, and (iii) 
$V^{(u)}_{\text{B},\mu} (\boldsymbol{\xi};\alpha)$ is strictly convex:
\begin{align}\label{Hesse-B}
&H^{\text{B}}_{j,j^\prime} (\boldsymbol{\xi}):=\partial_{\xi_j}\partial_{\xi_{j^\prime}} V^{(u)}_{\text{B},\mu} (\boldsymbol{\xi};\alpha ,\varepsilon ) \\
&={\footnotesize
\begin{cases}
2\alpha + \sum_{1\le k \le K}v_{a_k}'(\xi_j)  
+
 \sum_{\substack{\ell\neq j\\ 1\le l \le L}} \bigl( v_{b_l}'(\xi_j+\xi_\ell)+v_{b_l}'(\xi_j-\xi_\ell) \bigr) & \text{if $ j^\prime=j$,}\\
\sum_{1\le l \le L} \bigl( v_{b_l}'(\xi_{j^\prime}+\xi_j) + v_{b_l}'(\xi_{j^\prime}-\xi_j) \bigr) & \text{if $j^\prime\neq j$,}\\
\end{cases} }\nonumber
\end{align}
so
\begin{align*}
\sum_{1\leq j,j^\prime\leq n} & x_j x_{j^\prime}H^{\text{B}}_{j,j^\prime}    (\boldsymbol{\xi})
= \sum_{1\leq j\leq n} \Bigl(2 \alpha
+   \sum_{1\le k \le K}v_{a_k}'(\xi_j)  
 \Bigr) x_j^2  \\
&+  \sum_{\substack{1\leq j<j^\prime \leq n \\ 1\le l \le L}} 
\Bigl( v_{b_l}'(\xi_j +\xi_{j^\prime})(x_j+x_{j^\prime})^2+
v_{b_l}'(\xi_j -\xi_{j^\prime})(x_j-x_{j^\prime})^2 \Bigr) .
\end{align*}

Hence, for any $\mu\in\Lambda_{\text{B}}$ \eqref{weights-B} the critical
equation $\nabla_{\boldsymbol{\xi}} V^{(u)}_{\text{B},\mu} (\boldsymbol{\xi};\alpha, \varepsilon)=0$, i.e. the system
\begin{align}\label{CE-B}
2\alpha\xi_j +
 \sum_{1 \le k \le K} 
 v_{a_k}(\xi_j) 
+ \sum_{\substack{1\le j^\prime \le n,\,  j^\prime \neq j \\ 1 \le l\le L}}   \bigl( v_{b_l}(\xi_{j^\prime}+\xi_j)-v_{b_l}(\xi_{j^\prime}-\xi_j) \bigr)
=2\pi \left( \mu_j+\frac{\varepsilon}{2}\right)
\end{align}
($j=1,\dots, n$), has a unique solution $\boldsymbol{\xi}=\boldsymbol{\xi}^{(u)}_{\text{B},\mu}$ given by the global minimum
of the strictly convex Morse function $V^{(u)}_{\text{B},\mu} (\boldsymbol{\xi};\alpha, \varepsilon)$ \eqref{Morse-B}.

\begin{proposition}[Bethe roots of type B]\label{Bethe-roots-B:prp}
(i) For the parameter regime in Eq. \eqref{convex-B} and
any $\mu\in\Lambda_{\text{B}}$ \eqref{weights-B}, the unique global minimum $\boldsymbol{\xi}=\boldsymbol{\xi}^{(u)}_{\text{B},\mu}$ of $V^{(u)}_{\text{B},\mu} (\boldsymbol{\xi};\alpha ,\varepsilon)$ \eqref{Morse-B} produces a solution of the
 Bethe system of type B \eqref{BAE-B}.

(ii) The assignment $\mu\to \boldsymbol{\xi}^{(u)}_{\text{B},\mu}$, $\mu\in\Lambda_{\text{B}}$ is injective and  depends smoothly on the parameters
 \eqref{convex-B}.
\end{proposition}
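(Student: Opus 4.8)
The plan is to transcribe, with the evident modifications, the argument used for the type A systems in the proof of Proposition~\ref{Bethe-roots-A:prp}. The point is that the Morse function $V^{(u)}_{\text{B},\mu}$ in Eq.~\eqref{Morse-B} has been set up precisely so that its critical-point equations $\nabla_{\boldsymbol{\xi}} V^{(u)}_{\text{B},\mu}=0$, written out explicitly in Eq.~\eqref{CE-B}, turn into the Bethe system~\eqref{BAE-B} upon exponentiation; and the strict convexity established just above the statement guarantees that the global minimum $\boldsymbol{\xi}^{(u)}_{\text{B},\mu}$ is the unique solution of Eq.~\eqref{CE-B}.

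For part (i) I would start from the $j$th equation in Eq.~\eqref{CE-B}, multiply it by $i$, and exponentiate both sides. On the right-hand side, $e^{2\pi i(\mu_j+\varepsilon/2)}=(-1)^{\varepsilon}$ since $\mu_j\in\mathbb{Z}$. On the left-hand side the quadratic term produces $e^{2i\alpha\xi_j}$, while the terms involving the functions $v_{a_k}^{(u)}$ and $v_{b_l}^{(u)}$ are converted using the elementary identity $e^{-iv_a^{(u)}(x)}=\text{s}^{(u)}(ia+x)/\text{s}^{(u)}(ia-x)$ (valid for all $x\in\mathbb{R}$ and $a=a_k,b_l$ subject to Eq.~\eqref{convex-B}, which imposes the same constraints on $a_k,b_l$ as Eq.~\eqref{convex-A}), together with its reciprocal $e^{iv_a^{(u)}(x)}=\text{s}^{(u)}(ia-x)/\text{s}^{(u)}(ia+x)$. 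Applying these with $x=\xi_j$ for the $a_k$-terms, $x=\xi_{j^\prime}+\xi_j$ for the first $b_l$-terms and $x=\xi_{j^\prime}-\xi_j$ for the second, and using that $v_{b_l}^{(u)}$ is odd to rewrite $\xi_{j^\prime}\pm\xi_j$ in the symmetric form appearing in Eq.~\eqref{BAE-B}, reproduces exactly the two $b_l$-factors and the $a_k$-factor in Eq.~\eqref{BAE-B}. Hence $\boldsymbol{\xi}^{(u)}_{\text{B},\mu}$ solves the Bethe system of type B.

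For part (ii), injectivity is immediate from Eq.~\eqref{CE-B}: if $\boldsymbol{\xi}^{(u)}_{\text{B},\mu}=\boldsymbol{\xi}^{(u)}_{\text{B},\mu^\prime}$ then the left-hand sides coincide, forcing $\mu_j+\tfrac{\varepsilon}{2}=\mu_j^\prime+\tfrac{\varepsilon}{2}$ for all $j$, i.e.\ $\mu=\mu^\prime$. For the smooth dependence on the parameters I would invoke the implicit function theorem applied to the map sending $(\boldsymbol{\xi};\alpha,a_1,\dots,a_K,b_1,\dots,b_L)$ to $\nabla_{\boldsymbol{\xi}}V^{(u)}_{\text{B},\mu}(\boldsymbol{\xi};\alpha,\varepsilon)$: this map is manifestly smooth in $\alpha$ and in the $a_k,b_l$ on the domain~\eqref{convex-B}, and its Jacobian with respect to $\boldsymbol{\xi}$ is the Hessian~\eqref{Hesse-B}, which is positive definite and thus invertible; the implicitly defined solution $\boldsymbol{\xi}^{(u)}_{\text{B},\mu}$ therefore depends smoothly on the parameters.

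I do not expect a genuine obstacle here, as the proof is a near-verbatim copy of the type A case; the only real care is with the sign bookkeeping in the exponentiation (the appearance of $(-1)^{\varepsilon}=(-1)^{-\varepsilon}$, and the reciprocal identity needed for the $\xi_{j^\prime}-\xi_j$ factor), and with checking that differentiating the combined integrals $\int_0^{\xi_j+\xi_{j^\prime}}v_{b_l}^{(u)}+\int_0^{\xi_j-\xi_{j^\prime}}v_{b_l}^{(u)}$ in Eq.~\eqref{Morse-B} indeed yields the combination $v_{b_l}^{(u)}(\xi_{j^\prime}+\xi_j)-v_{b_l}^{(u)}(\xi_{j^\prime}-\xi_j)$ of Eq.~\eqref{CE-B} (which again uses oddness of $v_{b_l}^{(u)}$). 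It is worth noting in passing that the extra constraint $\mu_n>0$ in Eq.~\eqref{weights-B} plays no role in this proposition and is only needed later to confine the roots to the positive wedge.
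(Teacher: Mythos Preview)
Your proposal is correct and follows exactly the approach of the paper, which simply states that the proof of Proposition~\ref{Bethe-roots-A:prp} applies verbatim upon substituting the type~A objects by their type~B counterparts. You have merely spelled out those substitutions explicitly (including the sign bookkeeping for $(-1)^{\varepsilon}$ and the use of oddness of $v_{b_l}^{(u)}$), and your observation that the constraint $\mu_n>0$ is not needed here is accurate.
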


\begin{proof}
The proof of Proposition \ref{Bethe-roots-A:prp} applies verbatim upon substituting: A $\to$ B and Eqs. \eqref{BAE-A}, \eqref{convex-A}, \eqref{Morse-A}, \eqref{Hesse-A}, \eqref{CE-A}
$\to$ Eqs. \eqref{BAE-B}, \eqref{convex-B}, \eqref{Morse-B}, \eqref{Hesse-B}, \eqref{CE-B}, respectively.
\end{proof}

\subsection{Bethe bounds}\label{BB-B}
In the same way as in Section \ref{sec2}, we deduce from
the system in Eq. \eqref{CE-B} that the global minimum
$\boldsymbol{\xi}^{(u)}_{\text{B},\mu}$ of $V^{(u)}_{\text{B},\mu} (\boldsymbol{\xi};\alpha ,\varepsilon)$ \eqref{Morse-B} with  $\mu\in\Lambda_{\text{B}}$ \eqref{weights-B}
belongs to the open cone
\begin{equation}\label{wedge-B}
\mathbb{A}_{\text{B}}:= \{ \boldsymbol{\xi}\in\mathbb{R}^n\mid \xi_1 >\xi_2>\cdots >\xi_n>0 \} .
\end{equation}
Indeed, it is manifest from Eq. \eqref{CE-B}  that $\xi_j>0$ if $\mu_j>0$, because the expression on the LHS is monotonously increasing and odd in $\xi_j$ (for our parameter regime).
Moreover,  subtracting the $j^\prime$th equation from the $j$th equation now yields that
\begin{align}\label{CED-B}
2\alpha (\xi_j -\xi_{j^\prime}) &+
 \sum_{1 \le k \le K} 
\bigl(  v_{a_k}(\xi_j)- v_{a_k}(\xi_{j^\prime}) \bigr)  \\
&+\sum_{\substack{1\le j^{\prime\prime} \le n,\ j^{\prime\prime}\neq j,j^\prime \\ 1 \le l\le L}}  \bigl(  v_{b_l}(\xi_j+\xi_{j^{\prime\prime}})-v_{b_l}(\xi_{j^\prime}+\xi_{j^{\prime\prime}})\bigr) 
\nonumber\\
&+\sum_{\substack{1\le j^{\prime\prime} \le n\\ 1 \le l\le L}}  \bigl(  v_{b_l}(\xi_j-\xi_{j^{\prime\prime}})-v_{b_l}(\xi_{j^\prime}-\xi_{j^{\prime\prime}})\bigr)
=2\pi (\mu_j-\mu_{j^\prime}) ,\nonumber
\end{align}
so by monotonicity
$\xi_j > \xi_{j^\prime}$ if $\mu_j > \mu_{j^\prime}$  as before. Refining the analysis finally leads us to the following bounds on $\xi_j$ and on the gaps between $\xi_j$ and $\xi_{j^\prime}$ at 
$\boldsymbol{\xi}=\boldsymbol{\xi}^{(u)}_{\text{B},\mu}$.

\begin{proposition}[Bethe bounds of type B]\label{Bethe-bounds-B:prp}
For the parameter regime in Eq. \eqref{convex-B} and
any $\mu\in\Lambda_{\text{B}}$ \eqref{weights-B}, one has that at the global minimum $\boldsymbol{\xi}=\boldsymbol{\xi}^{(u)}_{\text{B},\mu}$ of
$V^{(u)}_{\text{B},\mu} (\boldsymbol{\xi};\alpha ,\varepsilon )$ \eqref{Morse-B}:
\begin{subequations}
\begin{equation}\label{bounds-B:a}
\frac{\pi\left( \mu_j+\frac{\varepsilon}{2}\right) }{\alpha+\kappa^{(u)}_-} \le \xi_j \le \frac{\pi\left( \mu_j+\frac{\varepsilon}{2}\right)}{\alpha+\kappa^{(u)}_+}
\end{equation}
for $1\leq j\leq n$, and
\begin{equation}\label{bounds-B:b}
\frac{\pi(\mu_j-\mu_{j^\prime})}{\alpha+\kappa^{(u)}_-} \le \xi_j-\xi_{j^\prime} \le \frac{\pi(\mu_j-\mu_{j^\prime})}{\alpha+\kappa^{(u)}_+}
\end{equation}
\end{subequations}
for $1\leq j<j^\prime\leq n$, where
\begin{equation*}
\kappa^{(u)}_-  := \begin{cases}
\sum_{1\leq k\leq K} a_k^{-1}+2(n-1) \sum_{1\leq l\leq L}   b_l^{-1}&\text{if}\ u=r, \\
\frac{1}{2}\sum_{1\leq k\leq K} \cot (\frac{1}{2} a_k)+(n-1) \sum_{1\leq l\leq L}   \cot (\frac{1}{2}b_l) &\text{if}\ u=h, \\
\frac{1}{2}\sum_{1\leq k\leq K} \coth (\frac{1}{2} \emph{Re} (a_k))+(n-1) \sum_{1\leq l\leq L}   \coth (\frac{1}{2}\emph{Re} (b_l)) &\text{if}\ u=t ,\end{cases}
\end{equation*}
and
\begin{equation*}
\kappa^{(u)}_+ := \begin{cases}
0&\text{if}\ u=r\ \text{or}\ u=h, \\
\frac{1}{2}\sum_{1\leq k\leq K} \tanh (\frac{1}{2} \emph{Re}(a_k))   & \\
\quad + (n-1) \sum_{1\leq l\leq L}   \tanh (\frac{1}{2}\emph{Re}(b_l)) &\text{if}\ u=t . \end{cases}
\end{equation*}
\end{proposition}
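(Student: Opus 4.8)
The plan is to adapt, essentially verbatim, the proof of Proposition~\ref{Bethe-bounds-A:prp}, now starting from the critical system \eqref{CE-B} satisfied by $\boldsymbol{\xi}=\boldsymbol{\xi}^{(u)}_{\text{B},\mu}$ (in place of \eqref{CE-A}) and from its differenced form \eqref{CED-B}. The only analytic ingredient is the elementary two-sided bound on the derivative of $v_a=v_a^{(u)}$ already recorded in the proof of Proposition~\ref{Bethe-bounds-A:prp}: for $a=a_k$ or $b_l$ subject to \eqref{convex-B},
\[
v_a'(x)\in
\begin{cases}
\bigl[0,\,2/a\bigr] & \text{if } u=r,\\
\bigl[0,\,\cot(\tfrac12 a)\bigr] & \text{if } u=h,\\
\bigl[\tanh(\tfrac12\mathrm{Re}(a)),\,\coth(\tfrac12\mathrm{Re}(a))\bigr] & \text{if } u=t,
\end{cases}
\]
uniformly for $x\in\mathbb{R}$; it is precisely the parameter ranges prescribed in \eqref{convex-B} that keep the integrands defining $v_a^{(u)}$ in \eqref{v} smooth and of one sign on all of $\mathbb{R}$, so that the mean value theorem is available on every interval.

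To obtain \eqref{bounds-B:a}, fix $j$, freeze the remaining coordinates at their equilibrium values, and view the left-hand side of the $j$th equation of \eqref{CE-B} as a function $f$ of $t=\xi_j$. Since $v_a^{(u)}$ is odd, $f$ is odd, so $f(0)=0$, while $f(\xi_j)=2\pi(\mu_j+\tfrac{\varepsilon}{2})$ by \eqref{CE-B} and $\xi_j>0$ by the observation preceding the proposition. Applying the mean value theorem to the differences $v_{a_k}(\xi_j)-v_{a_k}(0)$ and $v_{b_l}(\xi_{j'}+\xi_j)-v_{b_l}(\xi_{j'}-\xi_j)$ --- the arguments of the latter being separated by $2\xi_j$ --- gives
\[
2\pi\Bigl(\mu_j+\tfrac{\varepsilon}{2}\Bigr)=\Biggl(2\alpha+\sum_{1\le k\le K}v_{a_k}'(c_k)+2\sum_{\substack{1\le j'\le n,\ j'\neq j\\ 1\le l\le L}} v_{b_l}'(c_{j',l})\Biggr)\xi_j
\]
for suitable intermediate points $c_k,c_{j',l}$. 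Bounding each $v_{a_k}'(c_k)$ and $v_{b_l}'(c_{j',l})$ below, respectively above, by the constants displayed above shows that the bracket lies between $2\alpha+2\kappa^{(u)}_+$ and $2\alpha+2\kappa^{(u)}_-$, with $\kappa^{(u)}_\pm$ exactly as in the statement; here the coefficient $\tfrac12$ on the $a$-sum and the coefficient $n-1$ on the $b$-sum in $\kappa^{(u)}_\pm$ absorb the overall factor $2$ together with the $(n-1)$-fold sum over $j'\neq j$. Dividing through, \eqref{bounds-B:a} follows.

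For the gap bound \eqref{bounds-B:b}, start instead from \eqref{CED-B}, the difference of the $j$th and $j'$th equations of \eqref{CE-B}. Each difference of two values of a $v_a$ occurring there --- one for each $k$, one for each of the $n-2$ indices $j''\neq j,j'$ in the first $b$-sum, and one for each of the $n$ indices $j''$ in the second $b$-sum --- has its two arguments separated by exactly $\xi_j-\xi_{j'}$, so the mean value theorem rewrites it as $v_a'(\cdot)\,(\xi_j-\xi_{j'})$ with $\cdot$ an intermediate point. Collecting terms recasts \eqref{CED-B} as the identity $2\pi(\mu_j-\mu_{j'})=(\xi_j-\xi_{j'})\,B$, where the bracket $B$ equals $2\alpha$ plus $K$ numbers of type $v_{a_k}'$ and $(n-2)L+nL=2(n-1)L$ numbers of type $v_{b_l}'$; by the same estimates $B\in\bigl[\,2\alpha+2\kappa^{(u)}_+,\ 2\alpha+2\kappa^{(u)}_-\,\bigr]$, and dividing yields \eqref{bounds-B:b} (the surplus factor $2$ converting the $2\pi$ into $\pi$).

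I expect the only point requiring genuine attention to be the combinatorial bookkeeping: matching the multiplicity $2$ arising from the pair of arguments $\xi_{j'}\pm\xi_j$ in the first case, and the $(n-2)+n=2(n-1)$ count of $b$-differences in \eqref{CED-B} in the second, against the coefficients $\tfrac12$ and $n-1$ appearing in $\kappa^{(u)}_\pm$. The only other input, used at the outset, is the fact --- already established before the proposition --- that $\xi_j>0$ at the minimum, which in turn rests on the left-hand side of \eqref{CE-B} being odd and monotonically increasing in $\xi_j$ for the parameters \eqref{convex-B}.
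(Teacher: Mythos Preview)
Your proof is correct and follows essentially the same approach as the paper: apply the mean value theorem termwise to \eqref{CE-B} and to its differenced form \eqref{CED-B}, using the two-sided bounds on $v_a'$ already recorded in the proof of Proposition~\ref{Bethe-bounds-A:prp}. The paper's argument is terser but otherwise identical; your explicit bookkeeping of the factor $2$ from the argument separation $2\xi_j$ in the $b$-terms of \eqref{CE-B}, and of the count $(n-2)L+nL=2(n-1)L$ in \eqref{CED-B}, is exactly what is needed to match the coefficients $\tfrac12$ and $n-1$ in $\kappa^{(u)}_\pm$.
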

\begin{proof} As in the proof of Proposition \ref{Bethe-bounds-B:prp}, one deduces---from the bounds on the derivative of $v_a^{(u)
}$ \eqref{v} in combination the mean value theorem---via Eq. \eqref{CE-B} that
\begin{equation*}
(\alpha+\kappa^{(u)}_+) \xi_j \leq \pi\left( \mu_j+\frac{\varepsilon}{2}\right) \leq  (\alpha+\kappa^{(u)}_-) \xi_j
\end{equation*}
for $1\leq j\leq n$,
and via Eq. \eqref{CED-B} that
\begin{equation*}
(\alpha+\kappa^{(u)}_+) ( \xi_j -\xi_{j^\prime}) \leq \pi ( \mu_j-\mu_{j^\prime}) \leq  (\alpha+\kappa^{(u)}_-) (\xi_j-\xi_{j^\prime}) 
\end{equation*}
for $1\leq j<j^\prime\leq n$.
\end{proof}

\begin{remark}\label{complex-parameters:rem}
For $a$ complex and $x$ real, one has that
\begin{equation*}
\text{Re} \bigl( v_a^\prime (x) \bigr)= \frac{1}{2} \Bigl(      v_{\text{Re}(a)}^\prime \bigl(x+\text{Im}(a)\bigr)   +  v_{\text{Re}(a)}^\prime \bigl(x-\text{Im}(a)\bigr)       \Bigr)  .
\end{equation*}
The upshot is that the propositions in Sections \ref{sec2} and \ref{sec3} persist for complex parameters $a_k$ and $b_l$, provided both all non-real parameters $a_k$ and all non-real parameters  $b_l$ arise in complex conjugate pairs.  Indeed, we may in this situation relax the parameter restrictions in Eqs. \eqref{convex-A} and \eqref{convex-B}
by replacing
 $a_k$ with  $\text{Re} (a_k)$ and $b_l$  with $\text{Re} (b_l)$, while performing the same modifications in the expressions for the bounds in Propositions \ref{Bethe-bounds-A:prp} and \ref{Bethe-bounds-B:prp} (when $u=r$ and $u=h$).
\end{remark}

\section{Estimates for the zeros of Askey-Wilson and Wilson polynomials}\label{sec4}
The estimates for the zeros of the (Askey-)Wilson polynomials hinge on algebraic Bethe equations that arise
from the type B system in Eqs.  \eqref{BAE-B}, \eqref{convex-B} via the degeneration
$\alpha \to 0$. For this purpose, it is enough to restrict attention to the case that $\varepsilon=0$, which will therefore be assumed from now on (unless explicitly stated otherwise).

\subsection{Algebraic Bethe system of type B at $\alpha =0$}
The following proposition adapts the results of Section \ref{sec3} for $u=r$ and $u=t$ to the case $\alpha = 0$ with
$\mu\in \Lambda_B$ \eqref{weights-B} minimal:
\begin{equation}\label{rho-B}
\mu=\rho:=(n,n-1,n-2,\ldots ,2,1) 
\end{equation}
(and  $\varepsilon=0$).

\begin{proposition}[Bethe system of type B at $\alpha =0$]\label{alpha=0-Bethe-roots-B:prp}
Let  $K>2$, $L> 0$, and let $a_1,\ldots ,a_{K}$ and $b_1,\ldots ,b_L$ satisfy the restrictions in Eq. \eqref{convex-B}.
For $u=r$ and $u=t$ the unique global minimum $\boldsymbol{\xi}_{\text{B},\rho}^{(u)}$ of the strictly convex Morse function
$V^{(u)}_{\text{B},\rho} (\boldsymbol{\xi}; 0,0)$ \eqref{Morse-B}, \eqref{rho-B}
produces a solution to the algebraic Bethe system \eqref{BAE-B} at $\alpha =0$ and $\varepsilon=0$, which depends smoothly on the parameters 
$a_1,\ldots ,a_{K}$ and $b_1,\ldots ,b_L$.
Moreover, at $\boldsymbol{\xi}=\boldsymbol{\xi}_{\text{B},\rho}^{(u)}$  the following inequalities are satisfied:
\begin{subequations}
\begin{equation}
\frac{\pi (n+1-j)}{\kappa^{(r)}_-} \le \xi_j 
\end{equation}
($1\leq j\leq n$) and
\begin{equation}
\frac{\pi(j^\prime-j)}{\kappa^{(r)}_-} \le \xi_j-\xi_{j^\prime} 
\end{equation}
\end{subequations}
($1\leq j<j^\prime\leq n$) if $u=r$, and
\begin{subequations}
\begin{equation}
\frac{\pi (n+1-j)}{\kappa^{(t)}_-} \le \xi_j \le \frac{\pi (n+1-j)}{\kappa^{(t)}_+}
\end{equation}
($1\leq j\leq n$) and
\begin{equation}
\frac{\pi(j^\prime-j)}{\kappa^{(t)}_-} \le \xi_j-\xi_{j^\prime} \le \frac{\pi(j^\prime-j)}{\kappa^{(t)}_+}
\end{equation}
\end{subequations}
($1\leq j<j^\prime\leq n$) if $u=t$, with $\kappa^{(u)}_\pm$ taken from Proposition \ref{Bethe-bounds-B:prp}.
\end{proposition}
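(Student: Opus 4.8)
The plan is to re-run the type~B analysis of Section~\ref{sec3} at $\alpha=0$ (with $\varepsilon=0$ and $\mu=\rho$). Part~(i) of Proposition~\ref{Bethe-roots-B:prp} and Proposition~\ref{Bethe-bounds-B:prp} rest only on the exponentiation identity $e^{-iv_a^{(u)}(x)}=\text{s}^{(u)}(ia+x)/\text{s}^{(u)}(ia-x)$, on the strict convexity of the Morse function (positive definiteness of the Hessian~\eqref{Hesse-B}, which persists since $\sum_k\bigl(v_{a_k}^{(u)}\bigr)^\prime(\xi_j)>0$ even after the $2\alpha$ term is deleted), and on the mean value theorem --- all valid at $\alpha=0$. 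The one feature that does not automatically survive the degeneration is the coercivity of the Morse function: for $\alpha>0$ the quadratic term $\alpha\sum_j\xi_j^2$ forced $V^{(u)}_{\text{B},\mu}\to\infty$ at infinity, and it is now gone. So the crux of the argument is to show that $V^{(u)}_{\text{B},\rho}(\boldsymbol{\xi};0,0)$ still tends to $+\infty$ as $|\boldsymbol{\xi}|\to\infty$, and this is exactly where the hypotheses $K>2$, $L>0$ and the minimality of $\mu=\rho$ come in.

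First I would settle the coercivity. The case $u=t$ is easy: since the $2\pi$-periodic integrand of $v_a^{(t)}$ has unit mean one has $v_a^{(t)}(x)=x+O(1)$, so each term $\int_0^{\xi_j}v_{a_k}^{(t)}(x)\,\text{d}x$ grows quadratically, and as the $b_l$-integrals are nonnegative this gives $V^{(t)}_{\text{B},\rho}(\boldsymbol{\xi};0,0)\geq\frac{K}{2}|\boldsymbol{\xi}|^2-C\bigl(1+|\boldsymbol{\xi}|\bigr)\to\infty$. For $u=r$, however, $v_a^{(r)}(x)=2\arctan(x/a)$ has finite limits $\pm\pi$, so the convex functions $\xi\mapsto\int_0^\xi v_a^{(r)}(x)\,\text{d}x$ grow only linearly (with recession function $\xi\mapsto\pi|\xi|$) and coercivity becomes a genuine inequality. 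I would compute the recession function of $V^{(r)}_{\text{B},\rho}(\cdot;0,0)$ termwise, obtaining
\begin{equation*}
\boldsymbol{\eta}\mapsto -2\pi\sum_{1\le j\le n}\rho_j\eta_j+\pi K\sum_{1\le j\le n}|\eta_j|+\pi L\sum_{1\le j<j'\le n}\bigl(|\eta_j+\eta_{j'}|+|\eta_j-\eta_{j'}|\bigr),
\end{equation*}
and then bound it below using $|\eta_j+\eta_{j'}|+|\eta_j-\eta_{j'}|=2\max\bigl(|\eta_j|,|\eta_{j'}|\bigr)$, the rearrangement inequality (the weights $\rho_j=n+1-j$ being decreasing in $j$), and the identity $\sum_{1\le j<j'\le n}\max\bigl(|\eta_j|,|\eta_{j'}|\bigr)=\sum_{1\le i\le n}(n-i)|\eta_{(i)}|$ for $|\eta_{(1)}|\ge\cdots\ge|\eta_{(n)}|$. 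A short calculation then shows the recession function is $\ge\pi(K-2)\sum_j|\eta_j|$, hence strictly positive on $\boldsymbol{\eta}\neq 0$ precisely because $K>2$ (with $L>0$). Since a finite convex function whose recession function is positive in every nonzero direction has bounded sublevel sets, $V^{(r)}_{\text{B},\rho}(\cdot;0,0)$ is coercive.

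With coercivity in hand the remainder is routine. A smooth coercive function attains a global minimum, and by the strict convexity noted above the minimizer $\boldsymbol{\xi}^{(u)}_{\text{B},\rho}$ is unique and is the unique critical point. The critical equation $\nabla V^{(u)}_{\text{B},\rho}(\cdot;0,0)=0$ is the system~\eqref{CE-B} at $\alpha=\varepsilon=0$; multiplying it by $i$ and exponentiating via the identity above reproduces the algebraic Bethe system~\eqref{BAE-B} at $\alpha=\varepsilon=0$, exactly as in Proposition~\ref{Bethe-roots-B:prp}(i), while the implicit function theorem applied to~\eqref{CE-B}, whose Jacobian is the invertible Hessian, yields the smooth dependence on $a_1,\ldots,a_K$ and $b_1,\ldots,b_L$, as in Proposition~\ref{Bethe-roots-B:prp}(ii). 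The stated inequalities come from the mean value theorem estimates in the proof of Proposition~\ref{Bethe-bounds-B:prp} specialized to $\alpha=0$, $\varepsilon=0$, $\mu=\rho$: via~\eqref{CE-B} the bounds on $\bigl(v_a^{(u)}\bigr)^\prime$ give $\kappa^{(u)}_+\xi_j\le\pi(n+1-j)\le\kappa^{(u)}_-\xi_j$, and via~\eqref{CED-B} they give $\kappa^{(u)}_+(\xi_j-\xi_{j'})\le\pi(j'-j)\le\kappa^{(u)}_-(\xi_j-\xi_{j'})$; since $\kappa^{(r)}_+=0$ only the lower estimates survive when $u=r$, whereas for $u=t$ both do.

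The hard part will be the coercivity step for $u=r$. Because the recession function grows only linearly, its positivity is delicate: the negative drift $-2\pi\sum_j\rho_j\eta_j$ is offset only just barely by the $K$-fold $a$-contribution together with the pairwise $b$-contributions, and only because $\mu=\rho$ is the minimal weight in $\Lambda_{\text{B}}$ --- for a strictly larger $\mu$ the recession function already fails to be positive in the direction $\boldsymbol{\eta}=(1,0,\ldots,0)$, and $V^{(r)}_{\text{B},\mu}(\cdot;0,0)$ is then not coercive. Checking that the inequality closes with precisely the constraints $K>2$ and $L>0$ is the core of the proof.
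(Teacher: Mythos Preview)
Your proposal is correct. The overall architecture matches the paper's proof: both identify coercivity of $V^{(u)}_{\text{B},\rho}(\cdot;0,0)$ as the sole nontrivial issue, dispatch the trigonometric case via the quadratic growth stemming from the quasi-periodicity $v_a^{(t)}(x+2\pi)=v_a^{(t)}(x)+2\pi$, and observe that strict convexity, exponentiation to the Bethe system, smooth dependence, and the stated inequalities all carry over verbatim from Propositions~\ref{Bethe-roots-B:prp} and~\ref{Bethe-bounds-B:prp}.

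The genuine difference lies in how coercivity for $u=r$ is established. The paper exploits the hyperoctahedral symmetry of the nonlinear part of the Morse function (together with the inequality~\eqref{fund-dominance-B}) to reduce the analysis to the closed fundamental cone $\overline{\mathbb{A}_{\text{B}}}$, then passes to the ``gap'' coordinates $x_j=\xi_j-\xi_{j+1}$ and computes the leading asymptotics along each escape direction $\boldsymbol{\xi}\stackrel{J}{\to}\infty$ explicitly, arriving at the lower bound $\pi\sum_{j\in J} jx_j\bigl(-(2n+1-j)+K+L(2n-1-j)\bigr)\ge \pi\sum_{j\in J} jx_j$. Your route is instead through convex analysis: you compute the recession function of $V^{(r)}_{\text{B},\rho}(\cdot;0,0)$ directly on all of~$\mathbb{R}^n$, use the identity $|\eta_j+\eta_{j'}|+|\eta_j-\eta_{j'}|=2\max(|\eta_j|,|\eta_{j'}|)$ and the rearrangement inequality to bound it below by $\pi(K-2)\sum_j|\eta_j|$, and then invoke the standard fact that a finite convex function with strictly positive recession function has bounded sublevel sets. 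Your argument is slicker and yields a clean global bound without a symmetry reduction; the paper's approach is more elementary (no convex-analysis machinery) and its coordinate change $x_j=\xi_j-\xi_{j+1}$ is reused in the type~A analogue (Proposition~\ref{alpha=0-Bethe-roots-A:prp}), which lends some structural uniformity to the presentation. Both arguments pinpoint the same reason the hypotheses $K>2$, $L>0$ and $\mu=\rho$ are sharp.
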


\begin{proof}
We only need to check the existence of the unique global minimum of the Morse function at $(\alpha ,\varepsilon) =(0,0)$, since the rest of the proof
can be extracted verbatim from the proofs of
Propositions \ref{Bethe-roots-B:prp} and  \ref{Bethe-bounds-B:prp} via the specialization $\mu=\rho$,  $\alpha=0$, and  $\varepsilon =0$. It is moreover clear from the Hessian \eqref{Hesse-B} that the Morse function $V^{(u)}_{\text{B},\mu} (\boldsymbol{\xi}; 0,0)$ \eqref{Morse-B} remains strictly convex if $K>0$, which automatically settles the question of the minimum's uniqueness.
For the existence  to persist, it is enough to infer that $V^{(u)}_{\text{B},\rho} (\boldsymbol{\xi}; 0,0)\to \infty$  for $|\boldsymbol{\xi}|\to\infty$.  

When $u=t$ the quadratic growth of the integral $\int_0^\xi v^{(t)}_a(x) \text{d}x$ as $|\xi |\to \infty$---which stems from
the quasi-periodicity $v^{(t)}_a(x+2\pi)=v^{(t)}_a(x)+2\pi$---immediately guarantees the desired growth of $V^{(t)}_{\text{B},\mu} (\boldsymbol{\xi}; 0,0)$  for $K> 0$, therewith confirming  the existence of the (unique) global minimum of  our Morse function for any  $\mu\in\Lambda_B$ in this situation.

When $u=r$ one has  that  $\lim_{x\to \infty} v^{(r)}_a(x)=\pi $. This case is therefore more subtle, as the existence of the global minimum of $V^{(r)}_{\text{B},\mu} (\boldsymbol{\xi}; 0,0)$  is no longer guaranteed for all $\mu\in\Lambda_B$ \eqref{weights-B}.  We notice though that---apart from the linear term---our Morse function is symmetric with respect to the natural action of the hyperoctahedral group of signed permutations on the components of $\boldsymbol{\xi}$ (because $\int_0^\xi v^{(r)}_a(x) \text{d}x$ is even in $\xi$).
It thus suffices to infer that $V^{(r)}_{\text{B},\rho} (\boldsymbol{\xi};0,0) \to \infty$ when $|\boldsymbol{\xi}|\to\infty$ on the closure
of the fundamental cone $\mathbb{A}_{\text{B}}$ \eqref{wedge-B}. Notice in this connection also that  the linear term $\rho_1\xi_1+\cdots +\rho_n\xi_n$ 
grows fastest for $|\boldsymbol{\xi}|\to \infty$  on this fundamental cone, because
\begin{equation}\label{fund-dominance-B}
\rho_1(\xi_1-\epsilon_1\xi_{\sigma_1})+\cdots +\rho_n(\xi_n-\epsilon_n\xi_{\sigma_n})\geq  0
\end{equation}
 for all $\boldsymbol{\xi}\in\mathbb{A}_{\text{B}}$ \eqref{wedge-B}, $ \{ \sigma_1,\ldots ,\sigma_n\} =\{ 1,\ldots ,n\} $ and $\epsilon_j\in \{ 1,-1\}$ ($j=1,\ldots, n$).

To verify the unbounded growth of  $V^{(r)}_{\text{B},\rho} (\boldsymbol{\xi};0,0)$ on the closure of  $\mathbb{A}_{\text{B}}$, we set
$x_j=\xi_j-\xi_{j+1}$ ($j=1,\ldots ,n$) with the convention that $\xi_{n+1}\equiv 0$. Moreover, for a given nonempty subset
$J\subset \{ 1,\ldots ,n\}$,  let us write  $\boldsymbol{\xi}\stackrel{J}{\to} \infty$ if $x_j\to\infty$ for $j\in J$ while $x_j$ remains bounded for $j\not\in J$. 
Since
\begin{equation*}
\xi_j=x_j+\cdots +x_n 
\end{equation*}
$(1\leq j\leq n)$ and
\begin{align*}
 \xi_j-\xi_{j\prime}&=x_j+\cdots +x_{j^\prime-1} , \\
 \xi_j+\xi_{j\prime}&= x_j+\cdots +x_{j^\prime-1} +2 \bigl(   x_{j^\prime}+\cdots +x_n    \bigr)
\end{align*}
$(1\leq j<j^\prime \leq n) $,
it follows that for $\boldsymbol{\xi}\stackrel{J}{\to} \infty$
\begin{align*}
\sum_{1\leq j\leq n} \int_0^{\xi_j} v^{(r)}_{a_k}(x)\text{d}x &\sim \pi \sum_{j\in J} jx_j ,  \\
\sum_{1\leq j<j^\prime\leq n}   \int_0^{\xi_j- \xi_{j^\prime}}  v^{(r)}_{b_l} (x)\text{d}x &\sim \pi \sum_{j\in J} j(n-j) x_j ,\\
\sum_{1\leq j<j^\prime\leq n}   \int_0^{\xi_j +\xi_{j^\prime}}  v^{(r)}_{b_l} (x)\text{d}x & \sim \pi \sum_{j\in J} j(n-1) x_j .
\end{align*}
Hence, the leading asymptotics
of $V^{(r)}_{\text{B},\rho} (\boldsymbol{\xi};0,0)$ 
for $\boldsymbol{\xi}\stackrel{J}{\to} \infty$ is given by
\begin{align*}
& \pi \sum_{j\in J} jx_j  \bigl(    -(2n+1-j)  +K+ L(2n-1-j)                 \bigr)\geq  \pi \sum_{j\in J} jx_j
\end{align*}
(where for the last estimate it was used that $K> 2$ and $L> 0$). By varying our choice for $J$, this diverging lower bound on the leading asymptotics confirms that 
$V^{(r)}_{\text{B},\rho} (\boldsymbol{\xi};0,0) \to \infty$ for $|\boldsymbol{\xi}|\to\infty$ (first on the closure of the fundamental cone $\mathbb{A}_{\text{B}}$, and then on the whole $\mathbb{R}^n$
because of the hyperoctahedral symmetry of the nonlinear terms and Eq. \eqref{fund-dominance-B}).
\end{proof}

\begin{remark}
In the hyperbolic case one has that $\lim_{x\to \infty} v^{(h)}_a(x)=\pi -a$ (with $0<a<\pi$), which entails that
the existence of
the (unique) global minimum of $V^{(h)}_{\text{B},\rho} (\boldsymbol{\xi}; 0,0)$  is only guaranteed for $K>2$ and $L>0$  provided
 the parameters $a_k$ and $b_l$ lie  sufficiently close to $0$ in the interval $(0,\pi)$.
\end{remark} 

\begin{remark}\label{regularity-B} It is clear (from the stated inequalities) that the global minimum $\boldsymbol{\xi}=\boldsymbol{\xi}_{\text{B},\rho}^{(u)}$
of  $V^{(u)}_{\text{B},\rho} (\boldsymbol{\xi};0,0)$ in Proposition \ref{alpha=0-Bethe-roots-B:prp}
 is again assumed inside the cone
$\mathbb{A}_{\text{B}}$ \eqref{wedge-B}, both for $u=r$ and $u=t$. In the latter (trigonometric) case, the Bethe roots under consideration belong in fact to the interval $(0,\pi)$, i.e. one has that
\begin{equation}
\pi >\xi_1>\xi_2>\cdots > \xi_n>0
\end{equation}
at the critical point of $V^{(t)}_{\text{B},\rho} (\boldsymbol{\xi};0,0)$. Indeed, for $u=t$ the LHS of Eq. \eqref{CE-B}  takes the value $\bigl(2\alpha +K +2L(n-1)\bigr)\pi$ at $\xi_j=\pi$ (by the quasi-periodicity $v^{(t)}_a(x+2\pi)=v^{(t)}_a(x)+2\pi$ of the odd function $v^{(t)}_a(x)$ \eqref{v}), while the RHS takes only
the value $2\pi \left(n+1-j+\frac{\epsilon}{2}\right)$ when $\mu=\rho$ \eqref{rho-B}. Hence, it is immediate from the monotonicity of the LHS as function of $\xi_j$ that  the corresponding Bethe solutions must be smaller than $\pi$ when
$K>2 $ and $L>0$.
\end{remark}

\subsection{Wilson polynomials}
The Wilson polynomial \cite{wil:some}, \cite[\text{Ch. 9.1}]{koe-swa:hypergeometric}
\begin{align}\label{Wp}
\text{p}_n(\xi ;a,b,c,d):=&\frac{(-1)^n (a+b,a+c,a+d)_n}{(n+a+b+c+d-1)_n} \\
 &\times  {}_4F_3
\left[ \begin{array}{c} -n,n+a+b+c+d-1,a+i\xi ,a-i\xi \\
a+b,a+c,a+d \end{array}  ; 1 \right] \nonumber
\end{align}
is a monic polynomial of degree $n$ in $\xi^2$ that satisfies the second-order difference equation
\begin{equation}\label{DE-W}
A(\xi) \bigl( \text{p}_n(\xi+i) -\text{p}_n(\xi)\bigr)+
A(-\xi) \bigl( \text{p}_n(\xi-i) -\text{p}_n(\xi)\bigr) = E_n \text{p}_n(\xi)
\end{equation}
with
\begin{align*}
A(\xi )&= \frac{(\xi+ia)(\xi+ib)(\xi+ic)(\xi+id)}{2\xi(2\xi+i)} , \\
E_n&=-n(n+a+b+c+d-1) 
\end{align*}
(as a rational identity in the parameters $a,b,c,d$).
For $a,b,c,d>0$ the Wilson polynomials $\text{p}_n(\xi;a,b,c,d)$, $n=0,1,2,\ldots$ are (manifestly) analytic in the parameters and  constitute an orthogonal basis on the interval $(0,\infty)$ with respect to the weight function
\begin{equation}\label{weight-W}
\Delta (\xi)=\left| \frac{\Gamma (a+i\xi)\Gamma (b+i\xi)\Gamma (c+i\xi)\Gamma (d+i\xi)} {\Gamma (2i\xi )}\right|^2
\end{equation}
(where $\Gamma (\cdot)$ refers to the gamma function).

\begin{remark}\label{spectral-problem:rem}
It is helpful to view
the difference equation \eqref{DE-W}  as an eigenvalue equation  with eigenvalue $E_n$ for Wilson's second-order difference operator $D$ acting on $\text{p}_n(\xi)$ at the LHS.
The fact that the monic Wilson polynomials $\text{p}_n(\xi;a,b,c,d)$, $n=0,1,2,\ldots$ solve the eigenvalue equations in question implies that $D$
preserves the space of even polynomials in $\xi$ without raising the degree.
Since the corresponding eigenvalues $E_n$ are nondegenerate for $a,b,c,d>0$,
this triangularity of  the Wilson operator with respect to the monomial basis $\xi^{2n}$, $n=0,1,2,\ldots$ guarantees that its eigenbasis 
is unique in the space of even polynomials (up to normalization):
\begin{equation}
\text{p}_n(\xi ;a,b,c,d)  =  \left(  \prod_{0\leq m  <n}   \frac{D-E_m}{E_n-E_m} \right) \xi^{2n} ,\quad n=0,1,2,\ldots
\end{equation}
(using, e.g., the Cayley-Hamilton theorem in the finite-dimensional invariant subspace of polynomials of degree at most $n$ in $\xi^2$).
\end{remark}

\begin{theorem}[Zeros of the Wilson polynomials]\label{W:thm}
For $a,b,c,d>0$, the zeros
$
\xi_1^{(n)}>\xi_2^{(n)}>\cdots >\xi_n^{(n)}>0
$
of the Wilson  polynomial $\emph{p}_n(\xi;a,b,c,d)$ \eqref{Wp} obey the following inequalities:
\begin{subequations}
\begin{equation}
\frac{\pi (n+1-j)}{k^{(n)}_-(a,b,c,d)} \le \xi_j^{(n)} 
\end{equation}
($1\leq j\leq n$) and
\begin{equation}
\frac{\pi(j^\prime-j)}{k^{(n)}_-(a,b,c,d)} \le \xi_j^{(n)}-\xi_{j^\prime}^{(n)}
\end{equation}
($1\leq j<j^\prime\leq n$),
where
\begin{equation}
k^{(n)}_-(a,b,c,d):=2(n-1)+a^{-1}+b^{-1}+c^{-1}+d^{-1}.
\end{equation}
\end{subequations}
\end{theorem}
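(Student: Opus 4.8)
The plan is to realise the zeros of $\text{p}_n(\xi;a,b,c,d)$ as the specific solution $\boldsymbol{\xi}_{\text{B},\rho}^{(r)}$ of the rational Bethe system of type B at $\alpha=0$ that is supplied by Proposition \ref{alpha=0-Bethe-roots-B:prp} in the case $K=4$, $L=1$, $b_1=1$, and then to read the inequalities off from that proposition. Since $a,b,c,d>0$, the monic polynomial $\text{p}_n$ of degree $n$ in $\xi^2$ is orthogonal on $(0,\infty)$ with respect to the positive weight $\Delta(\xi)$ \eqref{weight-W}, so its $n$ zeros are simple and positive; labelling them $\xi_1^{(n)}>\cdots>\xi_n^{(n)}>0$ I would write $\text{p}_n(\xi)=\prod_{1\le l\le n}(\xi-\xi_l^{(n)})(\xi+\xi_l^{(n)})$. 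Evaluating the difference equation \eqref{DE-W} at $\xi=\xi_j^{(n)}$ and using $\text{p}_n(\xi_j^{(n)})=0$ together with $A(\xi_j^{(n)})\neq0$ and $\text{p}_n(\xi_j^{(n)}-i)\neq0$ (both clear because $\xi_j^{(n)}$ is real and positive while the zeros of $\text{p}_n$ in $\xi^2$ are positive) yields
\begin{equation*}
\frac{\text{p}_n(\xi_j^{(n)}+i)}{\text{p}_n(\xi_j^{(n)}-i)}=-\frac{A(-\xi_j^{(n)})}{A(\xi_j^{(n)})}\qquad(j=1,\ldots,n).
\end{equation*}

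Next I would substitute the factorisation of $\text{p}_n$ into the left-hand side and the explicit form of $A$ into the right-hand side. The ``diagonal'' factor $-(2\xi_j^{(n)}+i)/(2\xi_j^{(n)}-i)$ then occurs on both sides and cancels, and rewriting the surviving factors by means of $\text{s}^{(r)}(x)=x/2$ identifies the resulting equations with the rational Bethe system of type B \eqref{BAE-B}, \eqref{convex-B} at $u=r$, $\alpha=0$, $\varepsilon=0$, with $K=4$, $(a_1,a_2,a_3,a_4)=(a,b,c,d)$ and $L=1$, $b_1=1$ (which lies in the admissible regime, and satisfies $K>2$, $L>0$); this reproduces the Bethe-ansatz characterisation of the zeros from Refs. \cite{ism-lin-roa:bethe,die:equilibrium,oda-sas:equilibrium,bih-cal:properties:b}. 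Hence $\boldsymbol{\xi}^{(n)}:=(\xi_1^{(n)},\ldots,\xi_n^{(n)})$ is \emph{a} solution of this algebraic Bethe system. These computations are routine.

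The decisive step is to match $\boldsymbol{\xi}^{(n)}$ with the particular solution $\boldsymbol{\xi}_{\text{B},\rho}^{(r)}$ of Proposition \ref{alpha=0-Bethe-roots-B:prp}, i.e. to show that the integers $m_1,\ldots,m_n$ determined by $\boldsymbol{\xi}^{(n)}$ through the un-exponentiated critical system \eqref{CE-B} (with $\alpha=\varepsilon=0$) are exactly $m_j=\rho_j=n+1-j$. For this I would use that $v_{a_k}^{(r)}(\xi_j^{(n)})=2\arctan(\xi_j^{(n)}/a_k)\in(0,\pi)$ and that each difference $v_{b_1}^{(r)}(\xi_{j'}^{(n)}+\xi_j^{(n)})-v_{b_1}^{(r)}(\xi_{j'}^{(n)}-\xi_j^{(n)})$ lies in $(0,2\pi)$, so that the left-hand side of the $j$th equation of \eqref{CE-B} lies in $\bigl(0,2\pi(n+1)\bigr)$, forcing $m_j\in\{1,\ldots,n\}$; and, subtracting the $j'$th equation from the $j$th as in \eqref{CED-B}, that the strict monotonicity of $v_{a_k}^{(r)}$ (with $K=4\ge1$) forces $m_j>m_{j'}$ whenever $\xi_j^{(n)}>\xi_{j'}^{(n)}$. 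Therefore $m_1>\cdots>m_n$ with all $m_j\in\{1,\ldots,n\}$, whence $m=\rho$. Consequently $\boldsymbol{\xi}^{(n)}$ is the unique critical point of the strictly convex Morse function $V^{(r)}_{\text{B},\rho}(\boldsymbol{\xi};0,0)$, that is $\boldsymbol{\xi}^{(n)}=\boldsymbol{\xi}_{\text{B},\rho}^{(r)}$, and the asserted inequalities follow at once from the $u=r$ bounds in Proposition \ref{alpha=0-Bethe-roots-B:prp} on noting that here $\kappa^{(r)}_-=a^{-1}+b^{-1}+c^{-1}+d^{-1}+2(n-1)=k^{(n)}_-(a,b,c,d)$. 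The expected obstacle is precisely this identification of the Bethe ``quantum numbers'': the algebraic system \eqref{BAE-B} at $\alpha=0$ has in general several solutions, and one must single out the one governed by the convex Yang--Yang functional.
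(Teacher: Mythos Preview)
Your argument is correct, but it runs in the opposite direction from the paper's. The paper starts from the Bethe side: it takes the global minimum $\boldsymbol{\xi}=\boldsymbol{\xi}_{\text{B},\rho}^{(r)}$ supplied by Proposition~\ref{alpha=0-Bethe-roots-B:prp}, forms the polynomial $\text{p}_n(\xi)=\prod_j(\xi^2-\xi_j^2)$, and verifies that this polynomial satisfies the Wilson difference equation~\eqref{DE-W} by checking it at the nodes $\xi=\xi_j$ (where it reduces to the Bethe equations~\eqref{BAE-W}). The identification $\text{p}_n(\xi)=\text{p}_n(\xi;a,b,c,d)$ then follows from the \emph{spectral uniqueness} of Remark~\ref{spectral-problem:rem}: both are monic eigenfunctions for the nondegenerate eigenvalue $E_n$. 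You instead start from the Wilson zeros, derive the Bethe equations from~\eqref{DE-W}, and then pin down the quantum numbers $m_j$ by the range estimate $0<\sum_k v_{a_k}^{(r)}(\xi_j^{(n)})+\sum_{j'\neq j}\bigl(v_{1}^{(r)}(\xi_{j'}^{(n)}+\xi_j^{(n)})-v_{1}^{(r)}(\xi_{j'}^{(n)}-\xi_j^{(n)})\bigr)<2\pi(n+1)$ together with the strict monotonicity argument via~\eqref{CED-B}, forcing $m=\rho$ and hence $\boldsymbol{\xi}^{(n)}=\boldsymbol{\xi}_{\text{B},\rho}^{(r)}$ by strict convexity.

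The trade-off: the paper's route sidesteps the quantum-number bookkeeping entirely at the cost of invoking the operator-theoretic uniqueness of Remark~\ref{spectral-problem:rem}; your route is self-contained on the Bethe side and explains directly why $\rho$ is the correct label, but requires the careful (if elementary) range-and-ordering analysis that you flag as the ``expected obstacle''. Both arrive at the same identification and then read off the inequalities from Proposition~\ref{alpha=0-Bethe-roots-B:prp}.
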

\begin{proof}
Let  $\boldsymbol{\xi}=\boldsymbol{\xi}_{\text{B},\rho}^{(r)}$ denote the global minimum of $V^{(r)}_{\text{B},\rho} (\boldsymbol{\xi}; 0,0)$ \eqref{Morse-B}, \eqref{rho-B} for
$K=4$ and $L=1$, with
\begin{equation}\label{wilson-parameters}
a_1=a,\  a_2=b,\  a_3=c,\  a_4=d\quad\text{and}\quad b_1=1.
\end{equation}
Following \cite{oda-sas:equilibria,die:equilibrium,sas-yan-zha:bethe,bih-cal:properties:a}, we shall check that  the associated polynomial
\begin{equation}\label{pn}
\text{p}_n(\xi)=(\xi^2-\xi_1^2)(\xi^2-\xi_2^2)\cdots (\xi^2-\xi_n^2)
\end{equation}
satisfies the difference equation  \eqref{DE-W}. 
To this end we first observe that the substitution of $\text{p}_n(\xi)$ into Eq. \eqref{DE-W} gives rise to a polynomial relation of degree $n$ in $\xi^2$ (cf. Remark \ref{spectral-problem:rem}).
Since $\xi_1>\xi_2>\cdots >\xi_n>0$ (cf. Remark \ref{regularity-B}), it is sufficient to infer the polynomial relation in question at $\xi=\xi_j$ ($j=1,\ldots ,n$). This entails the following algebraic relations
\begin{equation}\label{BAE-W}
A  (\xi_j) \text{p}_n(\xi_j+i) +A(-\xi_j)\text{p}_n(\xi_j-i)=0\quad (j=1,\ldots, n),
\end{equation}
which in turn hold by Proposition \ref{alpha=0-Bethe-roots-B:prp}. Indeed---upon making $A(\cdot)$ and $\text{p}_n(\cdot )$ explicit---it is readily seen that Eq. \eqref{BAE-W} amounts
precisely to the $K=4$, $L=1$ Bethe system of type B with parameters \eqref{wilson-parameters} that is solved by $\boldsymbol{\xi}=\boldsymbol{\xi}_{\text{B},\rho}^{(r)}$.

The upshot is that $\text{p}_n(\xi;a,b,c,d)$ \eqref{Wp} and  $\text{p}_n(\xi)$ \eqref{pn} are both monic polynomials of degree $n$ in $\xi^2$ satisfying the eigenvalue equation \eqref{DE-W}. Since the corresponding eigenvalues $E_n$, $n=0,1,2,\ldots$ are nondegenerate for $a,b,c,d>0$, this implies (cf. Remark \ref{spectral-problem:rem}) that  $\text{p}_n(\xi;a,b,c,d)=\text{p}_n(\xi)$, i.e. $\xi_j^{(n)}=\xi_j$ ($j=1,\ldots ,n$). The asserted inequalities for the zeros are now inherited from those  for  $\boldsymbol{\xi}=\boldsymbol{\xi}_{\text{B},\rho}^{(r)}$ in
Proposition \ref{alpha=0-Bethe-roots-B:prp}.
\end{proof}

\subsection{Askey-Wilson polynomials}
The Askey-Wilson polynomial \cite{ask-wil:some}, \cite[\text{Ch. 14.1}]{koe-swa:hypergeometric}
\begin{align}\label{AWp}
\text{p}_n(\xi ;a,b,c,d;q):=&\frac{(ab,ac,ad ;q)_n}{(2a)^n (abcdq^{n
-1};q)_n}     \\ 
&\times {}_4\Phi_3
\left[ \begin{array}{c} q^{-n},abcdq^{n-1},ae^{i\xi},ae^{-i\xi} \\
ab,ac,ad \end{array}  ; q,q  \right]  \nonumber
\end{align}
is a monic polynomial of degree $n$ in $\cos(\xi)$. It  satisfies the second-order difference equation
\begin{equation}\label{DE-AW}
A(\xi) \bigl( \text{p}_n(\xi-i\log (q)) -\text{p}_n(\xi)\bigr)+
A(-\xi) \bigl( \text{p}_n(\xi+i\log (q)) -\text{p}_n(\xi)\bigr) = E_n \text{p}_n(\xi)
\end{equation}
with
\begin{align*}
A(\xi )&= \frac{(1-a e^{i\xi})(1-b e^{i\xi})(1-c e^{i\xi})(1-d e^{i\xi})}{(1-e^{2i\xi})(1-q e^{2i\xi})} , \\
E_n&=q^{-n}(1-q^n)(1-abcdq^{n-1})
\end{align*}
(as a rational identity in the parameters $a,b,c,d$ and $q$). For $-1<a,b,c,d,q<1$, the Askey-Wilson polynomials
are analytic in the parameters and
constitute an orthogonal basis  on the interval $(0,\pi)$ with respect to the weight function
\begin{equation}
\Delta (\xi)=\left|  \frac{(e^{2i\xi };q)_\infty}{(ae^{i\xi },b e^{i\xi},ce^{i\xi},de^{i\xi};q)_\infty}   \right|^2 .
\end{equation}
Notice that this means in particular that the singularities of $\text{p}_n(\xi ;a,b,c,d;q)$ \eqref{AWp} at $a=0$ and $q=0$ are in fact removable.

\begin{theorem}[Zeros of the Askey-Wilson polynomials]\label{AW:thm}
For $-1<a,b,c,d,q<1$, the zeros
$
\pi >\xi_1^{(n)}>\xi_2^{(n)}>\cdots >\xi_n^{(n)}>0
$
of the Askey-Wilson  polynomial $\emph{p}_n(\xi;a,b,c,d;q)$ \eqref{AWp} obey the following inequalities:
\begin{subequations}
\begin{equation}
 \frac{\pi (n+1-j) }{k_-^{(n)}(a,b,c,d;q)}  \leq \xi^{(n)}_j \leq  \frac{\pi (n+1-j) }{k_+^{(n)}(a,b,c,d;q)} 
\end{equation}
($1\leq j\leq n$) and
\begin{equation}
\frac{\pi(j^\prime-j)}{k^{(n)}_-(a,b,c,d;q)} \le \xi^{(n)}_j-\xi^{(n)}_{j^\prime} \le \frac{\pi(j^\prime-j)}{k^{(n)}_+(a,b,c,d;q)}
\end{equation}
($1\leq j<j^\prime\leq n$),
where
\begin{align}\label{kpm}
k_{\pm}^{(n)}  & (a,b,c,d;q):= (n-1)  \Biggl(\frac{1-|q|}{1+ |q|}\Biggr)^{\pm 1}+ \\
&\frac{1}{2}\left( \Biggl(\frac{1-|a|}{1+ |a|}\Biggr)^{\pm 1}+ \Biggl(\frac{1-|b|}{1+ |b|}\Biggr)^{\pm 1}+  \Biggl(\frac{1-|c|}{1+ |c|}\Biggr)^{\pm 1}+ \Biggl(\frac{1-|d|}{1+ |d|}\Biggr)^{\pm 1} \right).
\nonumber
\end{align}\end{subequations}
\end{theorem}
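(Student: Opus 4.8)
Mirroring the proof of Theorem~\ref{W:thm}, the plan is to replace the rational ($u=r$) Bethe system of type B by its trigonometric ($u=t$) version. First I would take $K=4$, $L=1$, $\alpha=0$, $\varepsilon=0$ and $\mu=\rho$ as in Eq.~\eqref{rho-B}, and encode the Askey--Wilson parameters through the substitution $e^{-a_1}=a$, $e^{-a_2}=b$, $e^{-a_3}=c$, $e^{-a_4}=d$, $e^{-b_1}=q$: for a parameter lying in $(0,1)$ one picks the corresponding $a_k$ (resp.\ $b_1$) in $(0,\infty)$, while for a parameter in $(-1,0)$ one picks it in $(0,\infty)+i\pi$ --- in both cases $\text{Re}(a_k)=-\log|a|$, $\text{Re}(b_1)=-\log|q|$. (This $(0,\infty)+i\pi$ branch is precisely the one that Eq.~\eqref{convex-B} allows when $u=t$; it is also the point at which the $q$-case genuinely departs from the Wilson case, where instead $b_1=1$.) Proposition~\ref{alpha=0-Bethe-roots-B:prp} then supplies the unique global minimum $\boldsymbol{\xi}=\boldsymbol{\xi}^{(t)}_{\text{B},\rho}$ of $V^{(t)}_{\text{B},\rho}(\boldsymbol{\xi};0,0)$ \eqref{Morse-B}, whose components obey $\pi>\xi_1>\cdots>\xi_n>0$ by Remark~\ref{regularity-B}.

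Next I would form the monic degree-$n$ polynomial in $\cos\xi$, namely $\text{p}_n(\xi):=\prod_{m=1}^n(\cos\xi-\cos\xi_m)$, and check that it satisfies the $q$-difference equation~\eqref{DE-AW}. Substituting $\text{p}_n(\xi)$ into the left-hand side of~\eqref{DE-AW} and moving $E_n\text{p}_n(\xi)$ over produces --- by the Askey--Wilson analogue of Remark~\ref{spectral-problem:rem} (the Askey--Wilson operator preserves the space of polynomials in $\cos\xi$ without raising the degree and has nondegenerate eigenvalues $E_n$ for $-1<a,b,c,d,q<1$) --- a polynomial in $\cos\xi$ of degree at most $n-1$. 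Since $\cos\xi_1,\ldots,\cos\xi_n$ are $n$ distinct numbers, it is enough to verify that this polynomial vanishes at $\xi=\xi_j$ for $j=1,\ldots,n$; there the bracketed differences collapse (because $\text{p}_n(\xi_j)=0$) to the algebraic relations
\[
A(\xi_j)\,\text{p}_n(\xi_j-i\log q)+A(-\xi_j)\,\text{p}_n(\xi_j+i\log q)=0\qquad(j=1,\ldots,n).
\]

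The heart of the argument --- and the step I expect to be the main obstacle --- is to recognise these relations as the trigonometric Bethe system of type B at $\alpha=\varepsilon=0$. Setting $z_j=e^{i\xi_j}$ and using the factorisations
\[
\cos(\xi\mp i\log q)-\cos\xi_m=\frac{(q^{\pm1}e^{i\xi}-e^{i\xi_m})(q^{\pm1}e^{i\xi}-e^{-i\xi_m})}{2q^{\pm1}e^{i\xi}},\qquad
A(\xi)=\frac{(1-ae^{i\xi})(1-be^{i\xi})(1-ce^{i\xi})(1-de^{i\xi})}{(1-e^{2i\xi})(1-qe^{2i\xi})},
\]
together with the trigonometric identity $\text{s}^{(t)}(ia+\xi)/\text{s}^{(t)}(ia-\xi)=(1-e^{-a}e^{i\xi})/(e^{i\xi}-e^{-a})$, the relation at $\xi=\xi_j$ reduces --- after cancelling the common factors $2q^{\pm1}z_j$ and $(z_j^{2}-q)/(1-qz_j^{2})$ --- to exactly the $K=4$, $L=1$ Bethe system~\eqref{BAE-B} at $\alpha=\varepsilon=0$ with the parameters fixed above, which is solved by $\boldsymbol{\xi}=\boldsymbol{\xi}^{(t)}_{\text{B},\rho}$ by virtue of Proposition~\ref{alpha=0-Bethe-roots-B:prp}. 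The only care needed is to track the sign of each Askey--Wilson parameter through the $(0,\infty)+i\pi$ branch; the remaining bookkeeping is the same as in the Wilson case.

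It then follows that $\text{p}_n(\xi)$ is a monic degree-$n$-in-$\cos\xi$ eigenpolynomial of the Askey--Wilson operator with eigenvalue $E_n$, so by nondegeneracy of the eigenvalues $\text{p}_n(\xi)=\text{p}_n(\xi;a,b,c,d;q)$ \eqref{AWp} and hence $\xi_j^{(n)}=\xi_j$. The asserted inequalities are precisely the $u=t$ bounds of Proposition~\ref{alpha=0-Bethe-roots-B:prp} (themselves inherited from Proposition~\ref{Bethe-bounds-B:prp}), once one identifies $\kappa^{(t)}_\pm$ with $k^{(n)}_\pm(a,b,c,d;q)$ of Eq.~\eqref{kpm}; this identification rests on the elementary evaluations $\coth(\tfrac12\text{Re}(a_k))=\coth(-\tfrac12\log|a|)=(1+|a|)/(1-|a|)$ and $\tanh(-\tfrac12\log|a|)=(1-|a|)/(1+|a|)$, and likewise with $a$ replaced by $q$. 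Finally, the degenerate cases in which one or more of $a,b,c,d,q$ equals $0$ --- where the parameter dictionary breaks down and, for $q=0$, Eq.~\eqref{DE-AW} itself degenerates --- are handled by continuity: $\text{p}_n(\xi;a,b,c,d;q)$ is analytic in $(a,b,c,d,q)\in(-1,1)^5$, its (simple) zeros depend continuously on the parameters, and $k^{(n)}_\pm$ stay finite and continuous there, so the inequalities pass to the limit.
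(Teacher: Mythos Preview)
Your proposal is correct and follows essentially the same route as the paper's own proof: the same parameter dictionary $e^{-a_k}=a,b,c,d$, $e^{-b_1}=q$ (with the $(0,\infty)+i\pi$ branch for negative parameters), the same trigonometric polynomial $\text{p}_n(\xi)=\prod_m(\cos\xi-\cos\xi_m)$, the same reduction of the difference equation \eqref{DE-AW} at the nodes to the $K=4$, $L=1$ trigonometric Bethe system of type B, the same appeal to nondegeneracy of the $E_n$, and the same continuity argument for vanishing parameters. The only cosmetic difference is that the paper invokes the product formula $\cos\xi-\cos\xi_j=2\sin\bigl(\tfrac12(\xi_j+\xi)\bigr)\sin\bigl(\tfrac12(\xi_j-\xi)\bigr)$ in place of your exponential factorisation, and your degree-$(n-1)$ formulation of the polynomial identity is arguably a touch cleaner than the paper's phrasing.
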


\begin{proof}
Let  $\boldsymbol{\xi}=\boldsymbol{\xi}_{\text{B},\rho}^{(t)}$ denote the global minimum of $V^{(t)}_{\text{B},\rho} (\boldsymbol{\xi}; 0,0)$ \eqref{Morse-B}, \eqref{rho-B} for
$K=4$ and $L=1$, with
\begin{equation}\label{askey-wilson-parameters}
e^{-a_1}=a,\  e^{-a_2}=b,\  e^{-a_3}=c,\  e^{-a_4}=d\quad\text{and}\quad e^{-b_1}=q
\end{equation}
(where it is temporarily assumed that $abcdq\neq 0$).
With the aid of  \cite{die:equilibrium} (cf. also \cite{ism-lin-roa:bethe}, \cite[Ch. 16.5]{ism:classical}, and \cite{oda-sas:equilibrium,sas-yan-zha:bethe,bih-cal:properties:b}),
 the proof of Proposition \ref{W:thm}  is readily adapted to the Askey-Wilson level starting from trigonometric polynomial
\begin{equation}\label{pn-AW}
\text{p}_n(\xi) =  \bigl( \cos (\xi) -\cos(\xi_1)\bigr)  \bigl( \cos (\xi) -\cos(\xi_2)\bigr)\cdots  \bigl( \cos (\xi) -\cos(\xi_n)\bigr) .
\end{equation}
Indeed, (similarly as before) the fact that $\text{p}_n(\xi)$ \eqref{pn-AW} solves
the second-order difference equation \eqref{DE-AW} encodes a trigonometric identity that
is equivalent to the following algebraic relations 
between the nodes $\pi >\xi_1>\xi_2>\cdots >\xi_1>0$:
\begin{equation}\label{BAE-AW}
A  (\xi_j) \text{p}_n\bigl(\xi_j-i\log (q)\bigr) +A(-\xi_j)\text{p}_n\bigl(\xi_j+i\log(q)\bigr)=0\quad (j=1,\ldots, n).
\end{equation}
Moreover, after making $A(\cdot)$ and $\text{p}_n(\cdot )$ explicit and invoking of the elementary trigonometric relation
$\cos (\xi )-\cos (\xi_j)=2\sin\bigl( \frac{1}{2}(\xi_j+\xi)\bigr) \sin\bigl( \frac{1}{2}(\xi_j-\xi)\bigr) $, the identities in
 Eq. \eqref{BAE-AW} become a manifest consequence of
 Proposition \ref{alpha=0-Bethe-roots-B:prp} upon identification with the
 trigonometric Bethe system of type B solved by $\boldsymbol{\xi}=\boldsymbol{\xi}_{\text{B},\rho}^{(t)}$.

Since for $a,b,c,d,q\in (-1,1)\setminus \{0\}$
the eigenvalues $E_n$  stemming from Eq. \eqref{DE-AW} are again nondegenerate, 
it is deduced through the difference equation that  $\text{p}_n(\xi;a,b,c,d;q)=\text{p}_n(\xi)$, and thus $\xi_j^{(n)}=\xi_j$ ($j=1,\ldots ,n$). (Here one uses again that the Askey-Wilson difference operator acting
on the LHS of Eq. \eqref{DE-AW} is triangular on the monomial basis $\cos^n (\xi)$, $n=0,1,2,\ldots$, cf. Remark \ref{spectral-problem:rem}.)
The asserted inequalities for the zeros now follow in the same manner as before from the formulas in
Proposition \ref{alpha=0-Bethe-roots-B:prp} through specialization. Finally, the resulting inequalities are readily extended to the case of one or more vanishing parameters $a,b,c,d,q$
by continuity.
\end{proof}

\section{Estimates for the zeros of the symmetric continuous Hahn polynomials}\label{sec5}
After performing the degeneration $\alpha\to 0$,
the rational Bethe system of type A in Eqs.  \eqref{BAE-A}, \eqref{convex-A} 
entails lower bounds for the zeros of the  symmetric continuous Hahn polynomials.

\subsection{Rational Bethe system of type A at $\alpha =0$}
The following proposition adapts the results of Section \ref{sec2} for $u=r$  to the case $\alpha = 0$  with
$\mu\in \Lambda_A$ \eqref{weights-A}  chosen minimal:
\begin{subequations}
\begin{equation}\label{rho-A}
\mu=\tilde{\rho}:=(\lfloor {\textstyle \frac{n-1}{2}}\rfloor ,\lfloor {\textstyle \frac{n-3}{2}}\rfloor ,\lfloor {\textstyle \frac{n-5}{2}}\rfloor ,\ldots ,\lfloor {\textstyle \frac{3-n}{2}}\rfloor ,\lfloor {\textstyle \frac{1-n}{2}}\rfloor ) 
\end{equation}
and
\begin{equation}\label{beta-n}
\beta =\beta_n :=\begin{cases}
\frac{1}{2}&\text{if}\ n=\text{even}, \\
0 &\text{if}\ n=\text{odd}.
\end{cases}
\end{equation}
\end{subequations}

\begin{proposition}[Rational Bethe system of type A at $\alpha =0$]\label{alpha=0-Bethe-roots-A:prp}
Let $K, L>0$ and let $a_1,\ldots ,a_{K}$ and $b_1,\ldots ,b_L$ satisfy the restrictions in Eq. \eqref{convex-A}.
The unique global minimum $\boldsymbol{\xi}_{\text{A},\tilde\rho}^{(r)}$ of the strictly convex Morse function
$V^{(r)}_{\text{A},\tilde\rho} (\boldsymbol{\xi}; 0,\beta_n )$ \eqref{Morse-A}, \eqref{rho-A}, \eqref{beta-n}
produces a solution to the algebraic Bethe system \eqref{BAE-A} at $\alpha =0$, which depends smoothly on the parameters 
$a_1,\ldots ,a_{K}$ and $b_1,\ldots ,b_L$.
Moreover, at $\boldsymbol{\xi}=\boldsymbol{\xi}_{\text{A},\tilde\rho}^{(r)}$  the following inequalities are satisfied:
\begin{equation}
\frac{2\pi(j^\prime-j)}{\kappa^{(r)}_-} \le \xi_j-\xi_{j^\prime} 
\end{equation}
($1\leq j<j^\prime\leq n$), with $\kappa^{(r)}_-$ taken from Proposition \ref{Bethe-bounds-A:prp}.
\end{proposition}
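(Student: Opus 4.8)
The plan is to run the argument from the proof of Proposition~\ref{alpha=0-Bethe-roots-B:prp}, now adapted to type~A. As there, once the existence of the (automatically unique) global minimum of $V^{(r)}_{\text{A},\tilde\rho}(\boldsymbol{\xi};0,\beta_n)$ \eqref{Morse-A} has been secured, the facts that this minimum solves the Bethe system \eqref{BAE-A} at $\alpha=0$, that it depends smoothly on $a_1,\dots,a_K$ and $b_1,\dots,b_L$, and that it obeys the asserted gap estimate are all obtained exactly as in the proofs of Propositions~\ref{Bethe-roots-A:prp} and~\ref{Bethe-bounds-A:prp} by specializing $\alpha=0$, $\mu=\tilde\rho$ and $\beta=\beta_n$; one only notes here that $\tilde\rho_j-\tilde\rho_{j^\prime}=j^\prime-j$, that $\kappa^{(r)}_+=0$ makes the upper estimate of Proposition~\ref{Bethe-bounds-A:prp} vacuous, and that the minimum still lies in the wedge $\mathbb{A}_{\text{A}}$ \eqref{wedge-A} by the sign argument applied to the $\alpha=0$ version of Eq.~\eqref{CED-A}. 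Uniqueness is moreover immediate: the quadratic form displayed right after Eq.~\eqref{Hesse-A} remains positive definite at $\alpha=0$ because the $v_{a_k}^\prime$-contributions (available since $K>0$) are positive on the constant vectors while the $v_{b_l}^\prime$-contributions (available since $L>0$) are positive on the non-constant ones.

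The real content is therefore the existence of the global minimum, i.e. the coercivity $V^{(r)}_{\text{A},\tilde\rho}(\boldsymbol{\xi};0,\beta_n)\to\infty$ as $|\boldsymbol{\xi}|\to\infty$, which is delicate precisely because the quadratic term $\frac12\alpha\sum_j\xi_j^2$ disappears at $\alpha=0$ while $\int_0^\xi v_a^{(r)}(x)\,\text{d}x$ grows only linearly ($v_a^{(r)}$ being bounded, $v_a^{(r)}(x)\to\pm\pi$). Here the shape of $\tilde\rho$ \eqref{rho-A} and $\beta_n$ \eqref{beta-n} is decisive through the identity $\mu_j+\beta_n=\frac{n+1}{2}-j$, which yields both $\sum_{1\le j\le n}(\mu_j+\beta_n)=0$ and $\sum_{1\le j\le i}(\mu_j+\beta_n)=\frac12 i(n-i)$ for $0\le i\le n$; the first of these says that the linear term of $V^{(r)}_{\text{A},\tilde\rho}$ annihilates the center-of-mass direction $\boldsymbol{\xi}\mapsto\boldsymbol{\xi}+c(1,\dots,1)$, along which coercivity is then supplied by the (strictly positive and unbounded) $a_k$-integrals. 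To handle the remaining directions I would first reduce to the closure of $\mathbb{A}_{\text{A}}$: the nonlinear part of $V^{(r)}_{\text{A},\tilde\rho}$ is invariant under permutations of $\xi_1,\dots,\xi_n$ (the $a_k$-integrals form a symmetric sum and, $\int_0^x v_{b_l}^{(r)}$ being even, the $b_l$-integrals depend only on the multiset $\{|\xi_j-\xi_{j^\prime}|\}_{j<j^\prime}$), while $(\mu_j+\beta_n)_j$ is strictly decreasing, so the rearrangement inequality gives $V^{(r)}_{\text{A},\tilde\rho}(\boldsymbol{\xi};0,\beta_n)\ge V^{(r)}_{\text{A},\tilde\rho}(\boldsymbol{\xi}^{\downarrow};0,\beta_n)$ where $\boldsymbol{\xi}^{\downarrow}$ denotes $\boldsymbol{\xi}$ with coordinates rearranged in weakly decreasing order.

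On the closure of $\mathbb{A}_{\text{A}}$ I would then substitute $x_i=\xi_i-\xi_{i+1}\ge 0$ ($i=1,\dots,n-1$) and combine the elementary bound $0\le\int_0^\xi v_a^{(r)}(x)\,\text{d}x=\pi|\xi|-O(1+\ln(1+|\xi|))$ with the counting identity $\sum_{1\le j<j^\prime\le n}(\xi_j-\xi_{j^\prime})=\sum_{1\le i\le n-1}i(n-i)x_i$: the $a_k$-integrals contribute at least $K\pi\sum_j|\xi_j|-O(\ln|\boldsymbol{\xi}|)$, the $b_l$-integrals at least $L\pi\sum_i i(n-i)x_i-O(\ln|\boldsymbol{\xi}|)$, and (by the two combinatorial identities above) the linear term equals exactly $-\pi\sum_i i(n-i)x_i$, so that
\begin{equation*}
V^{(r)}_{\text{A},\tilde\rho}(\boldsymbol{\xi};0,\beta_n)\ \ge\ (L-1)\pi\sum_{1\le i\le n-1} i(n-i)x_i\ +\ K\pi\sum_{1\le j\le n}|\xi_j|\ -\ O(\ln|\boldsymbol{\xi}|).
\end{equation*}
Since $L>0$ makes the first sum nonnegative and $K>0$ gives $K\pi\sum_j|\xi_j|\ge\pi|\boldsymbol{\xi}|$, the right-hand side diverges, which secures the global minimum. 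I expect this coercivity estimate — in particular the bookkeeping separating the genuinely order-$|\boldsymbol{\xi}|$ terms from the merely logarithmic ones — to be the only delicate point; it is in fact milder than its type~B counterpart in Proposition~\ref{alpha=0-Bethe-roots-B:prp}, because the $a_k$-integrals (present since $K>0$) already force growth once the center-of-mass direction has been neutralized by the special choice of $\tilde\rho$ and $\beta_n$ and the residual linear part has been absorbed into the $b_l$-integrals.
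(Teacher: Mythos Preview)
Your argument is correct and follows the paper's strategy: reduce everything to the coercivity of $V^{(r)}_{\text{A},\tilde\rho}(\cdot;0,\beta_n)$, restrict to the closed wedge $\overline{\mathbb{A}_{\text{A}}}$ via permutation symmetry and rearrangement (the paper's Eq.~\eqref{fund-dominance-A}), and control growth in the gap variables $x_i=\xi_i-\xi_{i+1}$ using the identities $\tilde\rho_j+\beta_n=\tfrac{n+1}{2}-j$ and $\sum_{j<j'}(\xi_j-\xi_{j'})=\sum_i i(n-i)x_i$. The only difference is cosmetic: where the paper argues direction by direction (letting $x_j\to\infty$ for $j$ in a varying subset $J\subset\{1,\dots,n\}$), you package the same information into a single global lower bound $V\ge K\pi\sum_j|\xi_j|+(L-1)\pi\sum_i i(n-i)x_i-O(\ln|\boldsymbol{\xi}|)$, which is arguably cleaner.
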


\begin{proof}
The statements are derived by adapting the proof of Proposition \ref{alpha=0-Bethe-roots-B:prp} to the present context.
Like before, it suffices to concentrate on the existence and the uniqueness of the global minimum of the Morse function at $\alpha =0$, as all other arguments
can be extracted verbatim from the proofs of
Propositions \ref{Bethe-roots-A:prp} and  \ref{Bethe-bounds-A:prp} via the specialization $\mu=\tilde\rho$, $\alpha=0$ and $\beta=\beta_n$. 
Since for $K\geq 1$ the strict convexity at $\alpha =0$ is manifest from
the Hessian \eqref{Hesse-A}, we will only infer the existence of the global minimum by verifying that
the Morse function $V^{(r)}_{\text{A},\tilde \rho} (\boldsymbol{\xi}; 0,\beta_n)\to\infty $   for $|\boldsymbol{\xi}|\to\infty$.   Upon exploiting the permutation symmetry of the nonlinear part of the Morse function, the required asymptotic analysis is restricted to the closure of the fundamental wedge $\mathbb{A}_{\text{A}}$ \eqref{wedge-A},
where we also use that the growth of the linear term $(\tilde{\rho}_1+\beta_n)\xi_1+\cdots + (\tilde{\rho}_n+\beta_n)\xi_n$ is maximal on this wedge because of
the inequality
\begin{equation}\label{fund-dominance-A}
(\tilde{\rho}_1+\beta_n)(\xi_1-\xi_{\sigma_1})+\cdots + (\tilde{\rho}_n+\beta_n)(\xi_n-\xi_{\sigma_n})\geq 0
\end{equation}
for all $\boldsymbol{\xi}\in\mathbb{A}_{\text{A}}$ and $\{ \sigma_1,\ldots ,\sigma_n\}=\{1,\ldots ,n\}$.

To verify the unbounded growth of  $V^{(r)}_{\text{A},\tilde\rho} (\boldsymbol{\xi};0,\beta_n)$ on the closure of  $\mathbb{A}_{\text{A}}$, we again set
$x_j=\xi_j-\xi_{j+1}$ ($j=1,\ldots ,n$) with the convention that $\xi_{n+1}\equiv 0$. Given a nonempty subset
$J\subset \{ 1,\ldots ,n\}$,  let us now write  $\boldsymbol{\xi}\stackrel{J}{\to} \infty$ if  $x_j\to\infty$ for $j\in J$ with $j<n$,  $|x_n|\to\infty$ for $n\in J$, 
while $x_j$ remains bounded for $j\not\in J$.
The
leading asymptotics
of $V^{(r)}_{\text{A},\tilde\rho} (\boldsymbol{\xi};0,\beta_n)$ 
for $\boldsymbol{\xi}\stackrel{J}{\to} \infty$ is then readily verified to dominate
\begin{equation*}
 \pi \sum_{j\in J} jx_j  \bigl(    -(n-j)  + L(n-j)                 \bigr)   
+  \begin{cases}  \pi K \sum_{j\in J} jx_j   &\text{if}\ n\not\in J ,\\
\pi K  |x_n| &\text{if}\ n\in J.
 \end{cases}  
 \end{equation*}
By varying our choice for $J$ (and assuming $K,L>0$), this diverging lower bound on the leading asymptotics confirms that 
$V^{(r)}_{\text{A},\rho} (\boldsymbol{\xi};0,\beta_n) \to \infty$ when $|\boldsymbol{\xi}|\to\infty$  (first on the closure of the fundamental wedge $\mathbb{A}_{\text{A}}$, and then on the whole $\mathbb{R}^n$
because of the permutation symmetry of the nonlinear terms and Eq. \eqref{fund-dominance-A}).
 \end{proof}

\subsection{Symmetric continuous Hahn polynomials}
The symmetric continuous Hahn polynomial \cite{ask-wil:set}, \cite[\text{Ch. 9.4}]{koe-swa:hypergeometric}
\begin{align}\label{cHp}
\text{p}_n(\xi ;a,b):=&\frac{i^n (2a,a+b)_n}{(n+2a+2b-1)_n} \\
 &\times  {}_3F_2
\left[ \begin{array}{c} -n,n+ 2a+2b-1,a+i\xi  \\
2a,a+b \end{array}  ; 1 \right] \nonumber
\end{align}
defines a monic polynomial of degree $n$ in $\xi$ satisfying the second-order difference equation
\begin{equation}\label{DE-cH}
A(\xi) \bigl( \text{p}_n(\xi+i) -\text{p}_n(\xi)\bigr)+
A(-\xi) \bigl( \text{p}_n(\xi-i) -\text{p}_n(\xi)\bigr) = E_n \text{p}_n(\xi)
\end{equation}
with
\begin{align*}
A(\xi )&= (\xi+ia)(\xi+ib) , \\
E_n&=-n(n+2a+2b-1) 
\end{align*}
(as a rational identity in the parameters $a,b$).
For $a,b>0$ the continuous Hahn polynomials $\text{p}_n(\xi;a,b)$, $n=0,1,2,\ldots$ are (manifestly) analytic in the parameters and form an orthogonal basis on $\mathbb{R}$ with respect to the weight function
\begin{equation}\label{weight-cH}
\Delta (\xi)=\left| \Gamma (a+i\xi)\Gamma (b+i\xi) \right|^2.
\end{equation}

\begin{theorem}[Zeros of the symmetric continuous Hahn polynomials]\label{cH:thm}
For $a,b>0$, the zeros
$
\xi_1^{(n)}>\xi_2^{(n)}>\cdots >\xi_n^{(n)}
$
of the symmetric continuous Hahn polynomial $\emph{p}_n(\xi;a,b)$ \eqref{cHp} obey the following inequalities:
\begin{subequations}
\begin{equation}
\frac{\pi(j^\prime-j)}{k^{(n)}_-(a,b)} \le \xi^{(n)}_j-\xi^{(n)}_{j^\prime} 
\end{equation}
($1\leq j<j^\prime\leq n$),
where
\begin{equation}
k^{(n)}_-(a,b):=n+a^{-1}+b^{-1} .
\end{equation}
\end{subequations}
\end{theorem}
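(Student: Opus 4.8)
The plan is to run the argument of Theorem~\ref{W:thm} (the Wilson case) with the type~B system replaced by the rational type~A system of Proposition~\ref{alpha=0-Bethe-roots-A:prp}. First I would let $\boldsymbol{\xi}=\boldsymbol{\xi}_{\text{A},\tilde\rho}^{(r)}$ denote the global minimum of $V^{(r)}_{\text{A},\tilde\rho}(\boldsymbol{\xi};0,\beta_n)$ \eqref{Morse-A}, \eqref{rho-A}, \eqref{beta-n} for $K=2$ and $L=1$, with
\[
a_1=a,\quad a_2=b\quad\text{and}\quad b_1=1 ,
\]
and form the monic degree-$n$ polynomial $\text{p}_n(\xi)=(\xi-\xi_1)(\xi-\xi_2)\cdots(\xi-\xi_n)$. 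The goal is to show that this $\text{p}_n(\xi)$ satisfies the continuous Hahn difference equation \eqref{DE-cH}; together with the uniqueness of the monic eigenpolynomial this gives $\text{p}_n(\xi)=\text{p}_n(\xi;a,b)$ \eqref{cHp}, whence the asserted inequalities are inherited from Proposition~\ref{alpha=0-Bethe-roots-A:prp}.

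To obtain that uniqueness I would first record the analogue of Remark~\ref{spectral-problem:rem} for the continuous Hahn difference operator $D\colon \text{p}(\xi)\mapsto A(\xi)\bigl(\text{p}(\xi+i)-\text{p}(\xi)\bigr)+A(-\xi)\bigl(\text{p}(\xi-i)-\text{p}(\xi)\bigr)$ with $A(\xi)=(\xi+ia)(\xi+ib)$: expanding $A(\pm\xi)$ and the divided differences of $\xi^m$ shows that the potential top term $\xi^{m+1}$ cancels between the two summands and that $D\,\xi^m=E_m\,\xi^m+(\text{lower order})$ with $E_m=-m(m+2a+2b-1)$. Hence $D$ preserves the space of polynomials in $\xi$ of degree $\le n$ without raising the degree and is triangular on the monomial basis $\xi^m$, so for $a,b>0$ (where the $E_m$ are pairwise distinct) $\text{p}_n(\xi;a,b)$ is the unique monic degree-$n$ solution of $D\text{p}=E_n\text{p}$. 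Consequently $D\text{p}_n-E_n\text{p}_n$ is a polynomial in $\xi$ of degree $\le n-1$, and since $\xi_1>\xi_2>\cdots>\xi_n$ are distinct it suffices to verify $\text{p}_n(\xi)=\text{p}_n(\xi;a,b)$ by checking that $D\text{p}_n-E_n\text{p}_n$ vanishes at the $n$ nodes $\xi=\xi_1,\dots,\xi_n$. As $\text{p}_n(\xi_j)=0$, this reduces to the nodal relations
\[
A(\xi_j)\,\text{p}_n(\xi_j+i)+A(-\xi_j)\,\text{p}_n(\xi_j-i)=0\qquad(j=1,\dots,n) .
\]

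Next I would make $A(\cdot)$ and $\text{p}_n(\cdot)$ explicit in these relations. Using $\text{p}_n(\xi_j\pm i)=(\pm i)\prod_{k\ne j}\bigl((\xi_j-\xi_k)\pm i\bigr)$ and dividing, the nodal relation becomes
\[
\frac{(\xi_j+ia)(\xi_j+ib)}{(-\xi_j+ia)(-\xi_j+ib)}=\prod_{k\ne j}\frac{(\xi_j-\xi_k)-i}{(\xi_j-\xi_k)+i} ,
\]
which I would then match against the rational ($u=r$) type~A Bethe system \eqref{BAE-A} at $\alpha=0$ with $\text{s}(x)=\frac{x}{2}$ from \eqref{S}, $K=2$, $L=1$ and the parameters above. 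Since $\text{s}(ic+x)/\text{s}(ic-x)=(x+ic)/(-x+ic)$ and $(i+t)/(i-t)=-(t+i)/(t-i)$, each of the $n-1$ hopping factors $\text{s}(i+\xi_j-\xi_{j'})/\text{s}(i-\xi_j+\xi_{j'})$ contributes a factor $-1$, and these combine with the prefactor $e^{2\pi i\beta}$; the equation so obtained coincides with the displayed nodal relation precisely because $\beta=\beta_n$ \eqref{beta-n} is tuned so that $e^{2\pi i\beta_n}(-1)^{n-1}=1$ (namely $e^{i\pi}(-1)^{n-1}=1$ for $n$ even and $(-1)^{n-1}=1$ for $n$ odd). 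The nodal relations therefore hold by Proposition~\ref{alpha=0-Bethe-roots-A:prp}, so $\text{p}_n(\xi)=\text{p}_n(\xi;a,b)$, i.e. $\xi^{(n)}_j=\xi_j$; and the same proposition gives $\frac{2\pi(j'-j)}{\kappa^{(r)}_-}\le\xi^{(n)}_j-\xi^{(n)}_{j'}$ for $1\le j<j'\le n$. Since here $\kappa^{(r)}_-=2(a^{-1}+b^{-1})+2n=2\,k^{(n)}_-(a,b)$, this is exactly $\frac{\pi(j'-j)}{k^{(n)}_-(a,b)}\le\xi^{(n)}_j-\xi^{(n)}_{j'}$, the claimed bound.

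I expect the main obstacle to lie in this last matching step: carefully tracking the accumulated sign $(-1)^{n-1}$ produced by the $n-1$ hopping factors and verifying that it is exactly cancelled by $e^{2\pi i\beta_n}$ (this is what forces the case-dependent choice \eqref{beta-n}), together with the degree bookkeeping for $D\text{p}_n$ that legitimizes the reduction of the polynomial identity \eqref{DE-cH} to the $n$ pointwise nodal relations. The remaining ingredients — existence, uniqueness and strict convexity of the Morse function at $\alpha=0$, the gap estimates, and smoothness in $a,b$ — are already supplied by Proposition~\ref{alpha=0-Bethe-roots-A:prp}, so no further work is needed there.
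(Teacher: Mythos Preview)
Your proposal is correct and follows essentially the same route as the paper's own proof: specialize the rational type~A Bethe system at $\alpha=0$ with $K=2$, $L=1$, $a_1=a$, $a_2=b$, $b_1=1$, identify the nodal relations \eqref{BAE-cH} with that Bethe system, and invoke triangularity plus nondegeneracy of the $E_n$ to conclude $\text{p}_n(\xi)=\text{p}_n(\xi;a,b)$, so that the bounds of Proposition~\ref{alpha=0-Bethe-roots-A:prp} transfer. Your explicit tracking of the sign $(-1)^{n-1}$ against $e^{2\pi i\beta_n}$ and the verification $\kappa^{(r)}_-=2k^{(n)}_-(a,b)$ are precisely the details the paper suppresses under ``upon identification''.
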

\begin{proof} The proof is very similar as that for the Wilson polynomials in Theorem \ref{cH:thm}.
Let  $\boldsymbol{\xi}=\boldsymbol{\xi}_{\text{A},\tilde\rho}^{(r)}$ denote the global minimum of $V^{(r)}_{\text{A},\tilde\rho} (\boldsymbol{\xi}; 0,\beta_n)$ \eqref{Morse-A}, \eqref{rho-A}, \eqref{beta-n} for
$K=2$ and $L=1$, with
\begin{equation}\label{cH-parameters}
a_1=a,\  a_2=b \quad\text{and}\quad b_1=1.
\end{equation}
Following \cite{oda-sas:equilibria,sas-yan-zha:bethe}, we now infer that  the associated polynomial
\begin{equation}\label{pnA}
\text{p}_n(\xi)=(\xi-\xi_1)(\xi-\xi_2)\cdots (\xi-\xi_n)
\end{equation}
satisfies the difference equation  \eqref{DE-cH}. 
Indeed, (as before) substitution of $\text{p}_n(\xi)$ into Eq. \eqref{DE-cH} and inferring the corresponding polynomial relation of degree $n$ in $\xi$ at the nodes
 $\xi_1>\xi_2>\cdots >\xi_n$ gives rise to the algebraic relations
\begin{equation}\label{BAE-cH}
A  (\xi_j) \text{p}_n(\xi_j+i) +A(-\xi_j)\text{p}_n(\xi_j-i)=0\quad (j=1,\ldots, n).
\end{equation}
When making $A(\cdot)$ and $\text{p}_n(\cdot )$ explicit, Eq. \eqref{BAE-cH} (and thus Eq. \eqref{DE-cH}) is seen to hold
upon identification with the $K=2$, $L=1$ Bethe system of type A with parameters \eqref{cH-parameters} solved by $\boldsymbol{\xi}=\boldsymbol{\xi}_{\text{A},\tilde\rho}^{(r)}$.

In view of the triangularity of the action of the pertinent continuous Hahn difference operator on the monomial basis $\xi^n$, $n=0,1,2,\ldots$ (cf. Remark \ref{spectral-problem:rem}),
the monic polynomials $\text{p}_n(\xi;a,b)$ \eqref{cHp} and  $\text{p}_n(\xi)$ \eqref{pnA}  thus coincide, as solutions of the eigenvalue equation \eqref{DE-cH} corresponding to the same nondegenerate   eigenvalue $E_n$.  
The upshot is that $\xi_j^{(n)}=\xi_j$ ($j=1,\ldots ,n$), so the asserted inequalities for the zeros are again inherited from those  for  $\boldsymbol{\xi}=\boldsymbol{\xi}_{\text{A},\tilde\rho}^{(r)}$ stemming from Proposition \ref{alpha=0-Bethe-roots-A:prp}.
\end{proof}

\begin{remark}\label{lower-bound-cH:rem}
Because the weight function $\Delta (\xi) $ \eqref{weight-cH}  is even in $\xi$, the symmetric continuous Hahn polynomial  $\emph{p}_n(\xi;a,b)$ \eqref{cHp} is even or odd in $\xi$ depending whether $n$ is
even or odd, respectively. Its zeros are therefore positioned symmetrically around the origin:
$\xi^{(n)}_{n+1-j}=-\xi^{(n)}_j$ ($j=1,\ldots ,n$).  For $a,b>0$, the following lower bounds for the positive zeros of $\emph{p}_n(\xi;a,b)$ \eqref{cHp} are now an immediate consequence of Theorem \ref{cH:thm} (upon picking $j^\prime=n+1-j$):
\begin{equation}
\frac{\pi(n+1-2j)}{2 k^{(n)}_-(a,b)} \le \xi^{(n)}_j \qquad\text{for}\  j=1,\ldots ,\lfloor {\textstyle \frac{n}{2}} \rfloor .
\end{equation}
\end{remark}

\begin{remark}
It follows from the proofs of Theorems \ref{W:thm}, \ref{AW:thm} and \ref{cH:thm} that the vector of decreasingly ordered zeros $\boldsymbol{\xi}^{(n)}:=(\xi_1^{(n)},\ldots ,\xi_n^{(n)})$ of the Wilson, Askey-Wilson and symmetric continuous Hahn polynomials minimize $V^{(r)}_{\text{B},\rho} (\boldsymbol{\xi}; 0,0)$ and $V^{(t)}_{\text{B},\rho} (\boldsymbol{\xi}; 0,0)$  for $K=4$, $L=1$,
and 
$V^{(r)}_{\text{A},\tilde\rho} (\boldsymbol{\xi}; 0,\beta_n)$ for $K=2$, $L=1$, where the parameters are taken from Eqs.  \eqref{wilson-parameters}, \eqref{askey-wilson-parameters} and \eqref{cH-parameters}, respectively. A different interpretation for these zeros as the minimizers of corresponding $n$-particle Ruijsenaars-Schneider type Hamiltonians from Refs. \cite{die:integrability,die:difference} has been pointed out in Refs. 
\cite{oda-sas:equilibria,die:equilibrium,oda-sas:equilibrium}.
\end{remark}

\begin{remark}\label{parameter-extension:rem}
In view of Remark \ref{complex-parameters:rem}, the inequalities in Theorem \ref{AW:thm}  persist in the case that $a,b,c,d$  become complex within the open unit disc with non-real parameters arising in complex conjugate pairs. 
Similarly, Theorem \ref{W:thm}  and Theorem \ref{cH:thm} extend to the situation that $\text{Re}(a), \text{Re}(b),\text{Re}(c),\text{Re}(d)>0$ and $\text{Re}(a), \text{Re}(b)>0$
with non-real parameters occurring in complex conjugate pairs, upon replacing
$k^{(n)}_-(a,b,c,d)$ by $k^{(n)}_-\bigr(\text{Re}(a),\text{Re}(b),\text{Re}(c),\text{Re}(d)\bigr)$ and
$k^{(n)}_-(a,b)$ by $k^{(n)}_-\bigr(\text{Re}(a),\text{Re}(b)\bigr)$, respectively.
\end{remark}

\begin{remark}
The following small numerical sample illustrates that the bounds stemming from Theorem \ref{AW:thm}
estimate the positions of the first few roots of the Askey-Wilson polynomials reasonably well for parameter values
$a,b,c,d,q$ not far from $0$.  
Notice, however, that for the large(st) root(s) our upper bound soon becomes trivial ($>\pi$)  even in a small example.
The lower bounds for the roots of the Wilson polynomials and of the continuous Hahn polynomials originating from Theorem \ref{W:thm} and from Theorem \ref{cH:thm} (cf. also Remark \ref{lower-bound-cH:rem})  tend to be less sharp than in the Askey-Wilson case. 

\begin{table}[]
\centering
\caption{Roots and their bounds for the Askey-Wilson polynomial $\text{p}_n(\xi ;a,b,c,d;q)$ \eqref{AWp} with $n=5$, $a=0.300$, $b=-0.200$, $c=0.150$, $d=0.100$, and  $q=0.100$.}
\label{AW:table}
\begin{tabular}{@{}|lc|ccccc|@{}}
\midrule 
                                     &    $n=5$  & $\xi^{(n)}_5$ & $\xi^{(n)}_4$ &  $\xi^{(n)}_3$ & $\xi^{(n)}_2$&       $\xi^{(n)}_1$           \\ \midrule
 & Root & 0.496                      &  0.997                      & 1.508                      & 2.033  & 2.577    \\ \cmidrule(r){1-2}
                & Lower bound  & 0.400                       & 0.800                      & 1.200                      & 1.600  & 2.000  \\ \cmidrule(r){1-2} 
 &Upper bound  & 0.675                       & 1.350                     & 2.025                      & 2.700  & 3.375 \\ 
   \midrule
  \end{tabular}
\end{table}

\begin{table}[]
\centering
\caption{Roots and their lower bounds for the Wilson polynomial $\text{p}_n(\xi ;a,b,c,d)$ \eqref{Wp} with $n=5$,   $a=1.150$, $b=1.100$, $c=1.000$,  and $d=0.900$.}
\label{W:table}
\begin{tabular}{@{}|lc|ccccc|@{}}
\midrule 
                                     &    $n=5$  & $\xi^{(n)}_5$ & $\xi^{(n)}_4$ &  $\xi^{(n)}_3$ & $\xi^{(n)}_2$&       $\xi^{(n)}_1$           \\ \midrule
 & Root & 0.632                       & 1.292                      & 2.090                      & 3.099 & 4.477      \\ \cmidrule(r){1-2} 
 &Lower bound  & 0.264                      & 0.528                      & 0.793                      & 1.057  & 1.321 \\
   \midrule
  \end{tabular}
\end{table}

\begin{table}[]
\centering
\caption{Positive roots and their lower bounds for the symmetric continuous Hahn polynomial $\text{p}_{n}(\xi ;a,b)$ \eqref{cHp} with $n=10$, $a=1.100$ and $b=0.900$.}
\label{cH:table}
\begin{tabular}{@{}|lc|ccccc|@{}}
\midrule 
                                     &    $n=10$  & $\xi^{(n)}_5$ & $\xi^{(n)}_4$ &  $\xi^{(n)}_3$ & $\xi^{(n)}_2$&       $\xi^{(n)}_1$           \\ \midrule
 & Root & 0.261                       & 0.838                      & 1.554                     & 2.481 & 3.770     \\ \cmidrule(r){1-2} 
 &Lower bound  & 0.131                      & 0.392                     &0.653                      & 0.915  & 1.176 \\
   \midrule
  \end{tabular}
\end{table}
\end{remark}

\section*{Acknowledgments}
Thanks are due to the referees for pointing out some improvements of the presentation.

\bibliographystyle{amsplain}

\end{document}